\theoremstyle{plain}
\newtheorem{thm}{\textbf{Theorem}}[section]
\newtheorem{lemma}[thm]{\textbf{Lemma}}
\newtheorem{prop}[thm]{\textbf{Proposition}}
\newtheorem{remark}{\textbf{Remark}}
\newtheorem{defn}{\textbf{Definition}}
\numberwithin{equation}{section}
\newcommand{\norm}[1]{\left|\!\left|{#1}\right|\!\right|}
\newcommand\Id{\mathrm{Id}}
\newcommand\N{\mathbb{N}}
\newcommand\R{\mathbb{R}}
\newcommand\FT{\mathcal{F}_{h}}
\title[$L^{p}$ estimates for joint quasimodes, HOC in two dimensions]{ $L^{p}$ estimates for joint quasimodes of semiclassical pseudodifferential operators whose characteristic sets have $k$th order contact}
\author{Melissa Tacy}
\email{mtacy@maths.otago.ac.nz}
\address{Department of Mathematics and Statistics, University of Otago}
\begin{document}

  \begin{abstract}
In this paper we develop $L^{p}$ estimates for functions $u$ which are joint quasimodes of semiclassical pseudodifferential operators $p_{1}(x,hD)$ and $p_{2}(x,hD)$ whose characteristic sets meet with $k$th order contact, $k\geq{}1$. As part of the technical development we use  Fourier integral operators to adapt a flat wavelet analysis to the curved level sets of $p_{1}(x,\xi)$.
\end{abstract}
\maketitle

Let $(M,g)$ be a two dimensional compact, boundaryless Riemannian manifold and $p_{1}(x,hD)$, $p_{2}(x,hD)$ be two semiclassical pseudodifferential operators $L^{2}(M)\to L^{2}(M)$. In this paper we consider the question of the concentration properties of a function $u$ that approximately solves
$$p_{1}(x,hD)u=0\quad\text{and}\quad p_{2}(x,hD)u=0.$$
In particular we ask about the growth rate of $\norm{u}_{L^{p}}$ compared to $\norm{u}_{L^{2}}$. A key example to keep in mind is when one of the equations requires that $u$ be an approximate solution to the eigenfunction equation,
\begin{equation}-\Delta_{g} u=\lambda^{2}u.\label{eigeq}\end{equation}
Equation \eqref{eigeq} can be converted to a semiclassical equation by dividing through by $\lambda^{2}$ and setting $h=\lambda^{-1}$. Then, we require that $u$ satisfies the semiclassical equation
$$(-h^{2}\Delta_{g}-1)u=0.$$
In \cite{S88} Sogge shows that eigenfunctions (in fact more generally spectral clusters) of the Laplacian obey
\begin{equation}\norm{u}_{L^{p}}\lesssim \lambda^{\delta(p)}\norm{u}_{L^{2}}\label{sogge}\end{equation}
where
$$\delta(p)=\begin{cases}
\frac{1}{2}-\frac{2}{p}&6\leq{}p\leq\infty\\
\frac{1}{4}-\frac{1}{2p}&2\leq p\leq 6.\end{cases}$$
Koch, Tataru and Zworski \cite{koch} extend this result to approximate solutions of any semiclassical equation $p(x,hD)u=0$ where the symbol $p(x,\xi)$ displays sufficiently Laplace-like behaviour. These results are sharp in the sense that there exist examples that saturate the estimates given by \eqref{sogge}. 

In his letter to Morawetz \cite{S06} Sarnak poses the question of potential improvements when $u$ is (in addition to being a Laplacian eigenfunction) an eigenfunction of $r$ other differential operators. He obtains $L^{\infty}$ results under the assumption that $M$ is a rank $r$ symmetric space. Marshall then \cite{M16} extends Sarnak's result to a full set of $L^{p}$ estimates. 

In \cite{Tacy19} Tacy examines this problem from the perspective of contact between the characteristic hypersurfaces $\{\xi\mid p_{j}(x,\xi)=0\}$ and obtains sharp results under the condition that if $\nu_{j}(x,\xi)$ is the normal to $\{\xi\mid p_{j}(x,\xi)=0\}$ then $\nu_{1},\dots,\nu_{r}$ are linearly independent. In the two dimensional case this means that, for all $x$, the sets $\{\xi\mid p_{1}(x,\xi)=0\}$ and $\{\xi\mid p_{2}(x,\xi)=0\}$ meet with order $0$ contact. The results of \cite{Tacy19} give that in that case
$$\norm{u}_{L^{p}}\lesssim \norm{u}_{L^{2}}\quad\forall p\geq{}2.$$
In this paper we address the case where $\{\xi\mid p_{1}(x,\xi)=0\}$ and $\{\xi\mid p_{2}(x,\xi)=0\}$ meet with higher order contact. 

 Before we state our main theorem, let us consider first what estimates we might reasonably expect. We study the flat model example where $p_{1}(x,\xi)=|\xi|^{2}-1$ and $p_{2}(x,\xi)=p_{2}(\xi)$ is a smooth curve such that $\{p_{2}(\xi)=0\}$ intersects the circle at $(1,0)$ with order $k$ contact. We will assume that $u$ is an order $h$ joint quasimode, that is
 $$\norm{(-h^{2}\Delta_{\R^{n}}-1)u}_{L^{2}}\lesssim h\norm{u}_{L^{2}}\quad\text{and}\quad\norm{p_{2}(hD)u}_{L^{2}}\lesssim h\norm{u}_{L^{2}}.$$
 This level of error is a very natural choice (as discussed in Section \ref{sec:QM}). We will use the Fourier transform method to analyse such functions $u$. In particular we work with the semiclassical Fourier transform
 $$\FT[u](\xi)=\frac{1}{(2\pi h)^{n/2}}\int e^{-\frac{i}{h}\langle x,\xi\rangle}u(x)dx.$$
 With this normalisation $\FT$ is still an isometry on $L^{2}$ and has the property that
 $$\FT[hD_{x_{i}}u]=\xi_{i}\FT[u]\quad\text{and}\quad \FT[p(hD)u]=p(\xi)\FT[u].$$
 So from the first semiclassical equation we have that
 $$(|\xi|^{2}-1)\FT[u]=O_{L^{2}}(h).$$
 That is $\FT[u]$ should live, predominately, in the annulus of width $h$ around $|\xi|^{2}=1$.  The second equation gives us that
 $$p_{2}(\xi)\FT[u]=O_{L^{2}}(h).$$
\begin{figure}[h!]
  \begin{minipage}[l]{0.45\textwidth}
  \begin{center}
    \includegraphics[scale=0.4]{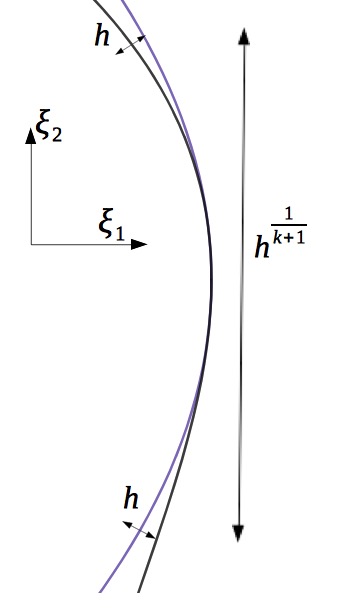}
    \end{center}
  \end{minipage}\hfill
  \begin{minipage}[l]{0.55\textwidth}
    \caption{
      The support of $\FT[u]$ must be within an order $h$ region of both curves
    } \label{fig:3ordercont}
  \end{minipage}
\end{figure}
So in addition $\FT[u]$ cannot be too large away from the set $p(\xi)=0$. We have assumed that $\{p_{1}(\xi)=0\}$ and $\{p_{2}(\xi)=0\}$ meet at a single point where they have $k$th order contact. The order of the contact controls how close the curves are to each other. If we write them locally as graphs around $\xi_{2}=0$,
$$\xi_{1}=a_{1}(\xi_{2})$$
and
$$\xi_{1}=a_{2}(\xi_{2})$$
then to have $k$th order contact the first $k$ derivatives must agree.  Once $|a_{1}(\xi_{2})-a_{2}(\xi_{2})|>ch$ it is impossible for the Fourier transform of $u$ to be simultaneously located in a order $h$ region of both curves. Therefore, since the contact is of order $k$, we would expect $\FT[u]$ to be small outside the region $|\xi_{2}|\leq h^{\frac{1}{k+1}}$, see Figure \ref{fig:3ordercont}.
 
 Now let's consider what kinds of quasimodes we can construct within such restrictions. We will use the family of examples, $T^{h}_{\alpha}(x)$ from \cite{Tacy18} given by
 $$T^{h}_{\alpha}(x)=\frac{h^{-\frac{3+\alpha}{2}}e^{\frac{i}{h}x_{1}}}{2\pi}\int_{\R^{2}}e^{\frac{i}{h}\left(x_{1}(\xi_{1}-1)+x_{2},\xi_{2}\right)}\chi_{\alpha}(\xi)d\xi$$
 where  $\omega_{0}$ corresponds to the unit vector in the $\xi_{1}$ direction and
 $$\chi_{\alpha}(\xi)=\chi_{\alpha}(r,\omega)=\begin{cases}1& |r-1|<h\text{ and }|\omega-\omega_{0}|<h^{\alpha}\\
 0 &\text{otherwise.}\end{cases}$$

\begin{figure}[h!]
    \centering
    \begin{minipage}{0.45\textwidth}
        \centering
        \includegraphics[scale=0.4]{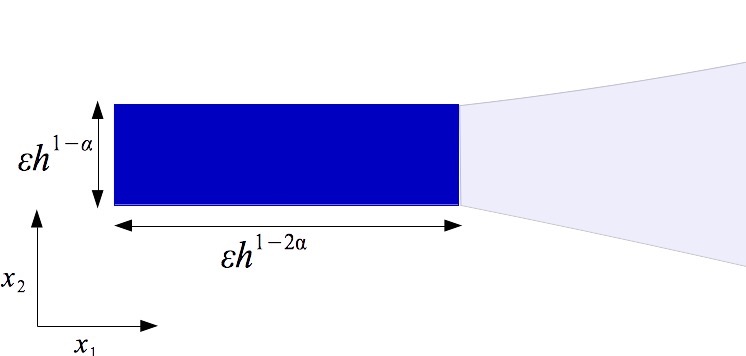}\label{fig:talpha}
         \caption{$T^{h}_{\alpha}$ saturates its $L^{\infty}$ estimates in an $\epsilon h^{1-2\alpha}\times \epsilon h^{1-\alpha}$ region.}
    \end{minipage}\hfill
    \begin{minipage}{0.45\textwidth}
        \centering
        \includegraphics[scale=0.4]{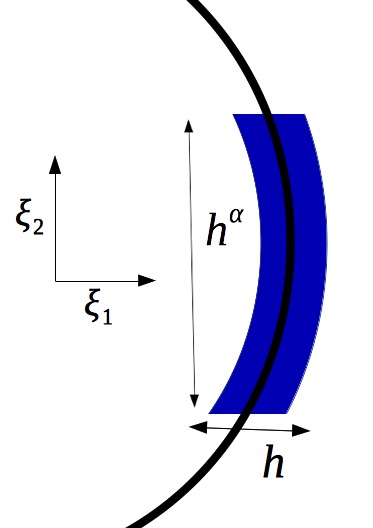}\label{fig:falpha}
     \caption{The support of $\FT\left[T^{h}_{\alpha}]\right]$ has an $h^{\alpha}$ spread.}
    \end{minipage}
\end{figure}

 Clearly $T^{h}_{\alpha}(x)$ is an order $h$ quasimode of $(-h^{2}\Delta_{\R^{2}}-1)$.  In \cite{Tacy18} it is demonstrated that $\norm{T^{h}_{\alpha}}_{L^{2}}=1$ and for some $\epsilon>0$ sufficiently small,
 $$|T^{h}_{\alpha}(x)|>ch^{-\frac{1}{2}+\frac{\alpha}{2}}$$
 on a $\epsilon h^{1-2\alpha}\times\epsilon h^{1-\alpha}$ tube about $(0,0)$ with long direction $e_{1}$. The contact condition means that we should only consider the behaviour of the examples where $h^{\alpha}\leq h^{\frac{1}{k+1}}$. Since the case where $k=0$ is treated in \cite{Tacy19} we focus on $k\geq{}1$. Notice that $h^{\frac{1}{2}}\leq h^{\frac{1}{k+1}}$ for  $k\geq 1$. Therefore the $T^{h}_{1/2}(x)$ are always joint quasimodes. These examples have the property that
 $$\norm{T_{1/2}^{h}}_{L^p}>c h^{-\frac{1}{4}+\frac{1}{2p}}.$$
 So for $2\leq p\leq 6$ we can expect no improvement over Sogge's estimate. To saturate the high $p$ estimates one usually picks $\alpha$ so that the support of $\FT[u]$ is spread through the largest possible region. In this case that is $\alpha=\alpha_{k}$ so that $h^{\alpha_{k}}=h^{\frac{1}{k+1}}$. That is $\alpha_{k}=\frac{1}{k+1}$ (if $\alpha$ is any smaller than $\alpha_{k}$, $T_{\alpha}^{h}(x)$ will not be a good quasimode of $p_{2}(hD)$). Now
\begin{align}
\norm{T_{\alpha_{k}}^{h}}_{L^{p}}&>ch^{-\frac{1}{2}+\frac{1}{2(k+1)}}\left(h^{1-\frac{2}{k+1}}h^{1-\frac{1}{k+1}}\right)^{\frac{1}{p}}\nonumber\\
&=ch^{-\frac{1}{2}+\frac{2}{p}+\frac{1}{k+1}\left(\frac{1}{2}-\frac{3}{p}\right)}.\label{highp}\end{align}
This tells us that we could never expect a better upper bound than \eqref{highp}. In Theorem \ref{thm:main} we see indeed that this is the highest growth rate possible. 

\begin{thm}\label{thm:main}
Suppose $u$ is a semiclassically localised,  strong joint  quasimode of order $h$ (see Definitions \ref{defn:localised} and \ref{defn:strongqm} for the definitions of semiclassically localised and strong joint quasimode) for a pair of semiclasscial pseudodifferential operators $p_{1}(x,hD), p_{2}(x,hD)$ where the symbols $p_{j}(x,\xi)$ obey the following admissibility conditions.
\begin{itemize}
\item For each $x_{0}$ and $j=1,2$ the set $\{\xi\mid p_{j}(x_{0},\xi)\}$ is a smooth hypersurface.
\item For each $x_{0}$ the sets $\{\xi\mid p_{j}(x_{0},\xi)\}$ meet at a single point $\xi_{0}$ and at that point have $k$th order contact.
\item There is some $j$ such that for all $x_{0}$, the sets $\{\xi\mid p_{j}(x_{0},\xi)=0\}$ have non-degenerate second fundamental form.

\end{itemize}
Then
$$\norm{u}_{L^{p}}\lesssim h^{-\delta(p,k)}\norm{u}_{L^{2}},$$
where
$$\delta(p,k)=\begin{cases}
-\left(\frac{1}{2}-\frac{2}{p}\right)+\frac{1}{k+1}\left(\frac{1}{2}-\frac{3}{p}\right)&6\leq p\leq\infty\\
-\frac{1}{4}+\frac{1}{2p}&2\leq p\leq 6.\end{cases}$$

\end{thm}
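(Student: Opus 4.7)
The plan is to split the argument by the range of $p$. For $2\leq p\leq 6$ the exponent $\tfrac{1}{4}-\tfrac{1}{2p}$ is exactly the Sogge exponent and is independent of $k$: since $u$ is a strong quasimode of $p_1$ and (after relabelling) $\{p_1(x_0,\cdot)=0\}$ has non-degenerate second fundamental form, this range follows at once from the semiclassical Sogge bound of Koch--Tataru--Zworski applied to $p_1$ alone, and the second quasimode equation plays no role. A direct calculation with $\theta=6/p$ shows that in the range $6\leq p\leq\infty$ the exponent $\delta(p,k)$ is the convex combination of the Sogge endpoint $\tfrac16$ at $p=6$ and the $L^\infty$ endpoint $\tfrac{k}{2(k+1)}$, so by log-convexity of $L^p$ norms it is enough to establish
\[
\norm{u}_{L^\infty}\lesssim h^{-\frac{k}{2(k+1)}}\norm{u}_{L^2},
\]
matching the extremiser $T^h_{\alpha_k}$ from the introduction.

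To reach the $L^\infty$ endpoint I would first reduce to a model. Fix a base point $x_0$ and the unique common zero $\xi_0$ of $p_1(x_0,\cdot)$ and $p_2(x_0,\cdot)$. Following the philosophy of the abstract, I would set up a wavelet decomposition in a flat model situation at angular scale $\theta=h^{1/(k+1)}$ on an $O(h)$ annulus, and transport it along a semiclassical Fourier integral operator constructed to normalise the $p_1$-characteristic geometry near $(x_0,\xi_0)$. After $O(1)$-size phase-space microlocalisation and this FIO conjugation, the formal reasoning of the introduction becomes rigorous: the equation $p_1u=O_{L^2}(h)$ localises $\FT[u]$ to the $O(h)$-neighbourhood of the model $p_1$-curve, $p_2u=O_{L^2}(h)$ together with $k$th order contact localises it further to a box of area $O(h^{1+1/(k+1)})$ around the intersection point (mass outside being killed by standard elliptic/Weyl-calculus arguments), and Bernstein
\[
\norm{u}_{L^\infty}\leq (2\pi h)^{-1}\norm{\FT[u]}_{L^1}\lesssim h^{-1}|\Supp\FT[u]|^{1/2}\norm{u}_{L^2}
\]
delivers the claimed exponent. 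A partition of unity in $x$ sums the microlocal pieces with bounded overlap.

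The main obstacle is making the FIO-wavelet reduction honest at the symbolic level. The $k$th order contact condition is a statement about the jets of the two characteristic curves at $\xi_0$, and it must be tracked through the FIO conjugation to sufficient order; this requires careful phase and amplitude choices since the FIO is designed to normalise $p_1$ rather than $p_2$. Beyond this, one needs the transported wavelet atoms (curved wavelets adapted to the level sets of $p_1$) to satisfy the same near-orthogonality estimates as their flat model counterparts, so that in particular one can recover the Sogge $L^6$ endpoint by a C\'ordoba--Fefferman square-function argument inside the curved geometry, and then glue it to the $L^\infty$ endpoint via the interpolation above. I expect this transfer of wavelet analysis from the flat model to the curved geometry, rather than any individual Bernstein step, to occupy most of the paper's technical work.
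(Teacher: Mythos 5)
Your reduction of the theorem to two endpoints is arithmetically sound and is genuinely different in structure from the paper: for $6\leq p\leq\infty$ the exponent $\delta(p,k)$ is indeed affine in $1/p$ and interpolates $h^{-1/6}$ at $p=6$ with $h^{-k/(2(k+1))}$ at $p=\infty$, and for $2\leq p\leq 6$ the claimed bound is the Koch--Tataru--Zworski/Sogge exponent for the curved operator $p_{1}$ alone, so citing that result there is legitimate (the paper instead derives the whole range in one pass from its $T^{\star}T$ analysis of the operators $W^{\star}_{a,j}$ and never invokes KTZ as a black box). The gap is in your $L^{\infty}$ endpoint. The Bernstein step $\norm{u}_{L^{\infty}}\lesssim h^{-1}|\Supp\FT[u]|^{1/2}\norm{u}_{L^{2}}$ is applied to $\FT[u]$ as if, after microlocalisation and FIO conjugation, $\FT[u]$ itself were concentrated in a single box of area $O(h^{1+\frac{1}{k+1}})$. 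That localisation holds for the conjugated function $v=Wu$ (equivalently, in the constant-coefficient model of the introduction), not for $u$: when $p_{1}(x,\xi)$ has variable coefficients the characteristic curves and the contact point move with $x$ over the $O(1)$ patch, so the flat Fourier support of $u$ sweeps out a much larger region, and the paper is explicit that $L^{p}$ information does not pass through $W$ by unitarity --- the $L^{2}\to L^{p}$ mapping behaviour of $W^{\star}$ on the frequency-localised pieces is exactly what has to be proved. Your proposal never supplies this transfer step, which is the content of the paper's Theorems \ref{thm:Waest} and \ref{thm:Wint} together with the coefficient bounds of Propositions \ref{prop:Xvest} and \ref{prop:largea}.

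The gap is repairable, and at $p=\infty$ no curvature is needed: writing $u(x_{1},\cdot)=W^{\star}(x_{1})v(x_{1},\cdot)$ and using the oscillatory-integral representation of $W^{\star}$, Cauchy--Schwarz in $\xi_{2}$ over the region $|\xi_{2}|\lesssim 2^{j}h^{\frac{1}{k+1}}$ gives a pointwise bound $h^{-\frac{1}{2}}(2^{j}h^{\frac{1}{k+1}})^{\frac{1}{2}}\sup_{x_{1}}\norm{v_{j}(x_{1},\cdot)}_{L^{2}_{x_{2}}}$ for each dyadic piece; one then needs (i) a uniform-in-$x_{1}$ bound on $\norm{v_{j}(x_{1},\cdot)}_{L^{2}_{x_{2}}}$, which requires a one-dimensional Sobolev (or wavelet-in-$x_{1}$) argument exploiting $hD_{x_{1}}v=O_{L^{2}}(h)$, and (ii) rapid decay in $j$ of the tails, which uses the \emph{strong} joint quasimode hypothesis exactly as in Propositions \ref{prop:Xvest} and \ref{prop:largea}, not merely ``standard elliptic arguments''; one also needs the paper's Proposition \ref{prop:newops} to know the conjugated symbol really is $\xi_{1}-\xi_{2}^{k+1}g$ with $g\neq 0$, i.e.\ that the contact order survives Egorov, which you flag but do not prove. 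Finally, note a small internal inconsistency: having invoked KTZ for the $L^{6}$ endpoint you do not also need a C\'ordoba--Fefferman square-function argument in the curved geometry; what is missing is not near-orthogonality of the curved wavelets but the $L^{2}\to L^{\infty}$ transfer just described.
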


\begin{remark}
It is necessary only to consider the case where $u$ is a strong joint quasimode of order $h$ of two operators. Suppose $u$ is a strong joint quasimode of order $h$  of $r$ operators $p_{i}(x,hD)$ with orders of contact $k_{ij}$. Then the best estimate for the $L^{p}$ norms of $u$ will come from applying Theorem \ref{thm:main} to the pair $p_{i}(x,hD),p_{j}(x,hD)$ with smallest contact order $k_{ij}$. 
\end{remark}

In this paper we will many times rely on a number of standard results from semiclassical analysis. In particular those regarding the composition and invertibility properties of semiclassical pseudodifferential operators and the development of parametrix constructions for propagators. The relevant results are listed in Appendix A for the convenience of the reader. 

\section{Combining wavelets with Fourier integral operators}

To obtain the results of Theorem \ref{thm:main} we develop a way to combine the theory of Fourier integral operators with that of wavelet analysis. The basic idea dates back to some of the earliest successes in microlocal analysis. In \cite{Feff83} Fefferman describes ``the algorithm of the 70s'' for understanding the $L^{2}$ theory of PDEs with variable coefficients. Consider a partial differential equation
$$Pu=0\quad\text{where}\quad P=\sum_{|\gamma|\leq{}N}c_{\gamma}(x)D^{\gamma}.$$
This can be expressed as a pseudodifferential equation $p(x,D)u=0$ where
$$p(x,D)u=\frac{1}{(2\pi)^{n}}\int e^{i\langle x-y,\xi\rangle}p(x,\xi)u(y)d\xi dy$$
and
$$p(x,\xi)=\sum_{|\gamma|\leq{}N}c_{\gamma}(x)\xi^{\gamma}.$$
Standard considerations about the invertibility of pseudodifferential operators ensure that if $Pu=0$, the microsupport of $u$ must lie inside the set $\{p(x,\xi)=0\}$. The simplest manifestation of ``the algorithm of the 70s'' is the case where $\{p(x,\xi)=0\}$ is a smooth hypersurface. The idea is to (after localisation) straighten $\{p(x,\xi)=0\}$ out to become the hypersurface $\{\xi_{1}=0\}$. The solutions to $D_{x_{1}}u=0$ are simple to understand, this algorithm allows  information to be carried back to the more complicated $Pu=0$. In particular one develops a unitary Fourier integral operator $W$ that has the property that
$$D_{x_{1}}W=Wp(x,D).$$
Then if $v=Wu$ and $Pu=0$  we have that $v$ is a solution to $D_{x_{1}}v=0$ (or an approximate solution if $u$ is only an approximate solution to $Pu=0$). Exactly the same technique can be applied to semiclassical pseudodifferential operators, except in this case we have
$$hD_{x_{1}}W=Wp(x,hD).$$
Since $W$ is unitary the $L^{2}$ theory for $u$ follows from the $L^{2}$ theory for $v$. However it is immediate that the $L^{p}$ theory cannot so directly follow. Take for example $p(x,\xi)=\xi_{1}-\xi_{2}^{2}$ localised near $(0,0)$. The hypersurface $\{\xi_{1}-\xi_{2}^{2}=0\}$ can be flattened out to $\{\xi_{1}=0\}$ by a suitable Fourier integral operator. Note that $\{\xi_{1}=0\}$ is flat and $\{\xi_{1}-\xi_{2}^{2}=0\}$ is curved. Classical theory on the Fourier restriction/extension problem (see for example Stein \cite{S} Chapter 9 for an authoritative source) tell us that the $L^{p}$ theory of solutions to $Pu=0$ depend crucially on the curvature of the characteristic set. This information about the curvature is encoded in the $L^{2}\to L^{p}$ mapping properties of $W^{-1}$. 

In this paper we will use ``the algorithm of the 70s'' but incorporate some wavelet theory. Suppose we have an operator $W$ so that
$$hD_{x_{1}}W=Wp_{1}(x,hD)$$
and we assume that $p_{1}(x,\xi)$ is the symbol that satisfies the curvature condition of Theorem \ref{thm:main}. Then if $v=Wu$, we have that $v$ is an order $h$ quasimode of $hD_{x_{1}}u=0$. That is 
$$\norm{hD_{x_{1}}v}_{L^{2}}\lesssim h\norm{u}_{L^{2}}.$$
Therefore the semiclassical Fourier transform of $v$, $\FT[v]$ must be supported near $\{\xi_{1}=0\}$. This property makes $v$ very suitable for a wavelet decomposition in the $x_{1}$ variable. To that end let $f$ be smooth compactly supported function with
$$\int f(\tau)d\tau=0\quad \text{and}\quad C_{f}=\int\frac{|\hat{f}(\xi)|^{2}}{|\xi|}d\xi<\infty.$$
Then using the continuous wavelet transform we can write
$$v=\frac{1}{C_{f}}\int \frac{1}{|a|^{5/2}}X_{v}(a,b,x_{2})f(a^{-1}(x_{1}-b))dadb$$
where
$$X_{v}(a,b,x_{2})=\frac{1}{|a|^{1/2}}\int f(a^{-1}(y_{1}-b))v(y_{1},x_{2})dy_{1}.$$
Since $v$ is a good approximate solution to $hD_{x_{1}}v=0$ we expect that the main contribution to the integral comes from where $|a|\sim 1$. We write
$$u=W^{-1}v=\frac{1}{C_{f}}\int \frac{1}{|a|^{5/2}}W^{-1}\left[X_{v}(a,b,x_{2})f(a^{-1}(x_{1}-b))\right]dadb.$$
We also know that $u$ is a quasimode of $p_{2}(x,hD)$. Heuristically $W$ straightens out $\{p_{1}(x,\xi)=0\}$ to become $\{\xi_{1}=0\}$ so we would expect that $\{p_{2}(x,\xi)=0\}$ transforms to a curve that meets $\{\xi_{1}=0\}$ with order $k$ contact (as depicted in Figure \ref{fig:wacts}). We may as well assume that this intersection point is $(0,0)$. Then near $(0,0)$, $p_{2}(x,\xi)$ should have local form $p_{2}(x,\xi)=\xi^{k+1}g(x,\xi)$ where $g(0,0)\neq{}0$. So we would expect that $v$ is an order $h$ quasimode of $h^{k+1}D^{k+1}_{x_{2}}$ in addition to $hD_{x_{1}}=0$. 
\begin{figure}[h!]
  \begin{minipage}[c]{0.45\textwidth}
  \begin{center}
    \includegraphics[scale=0.4]{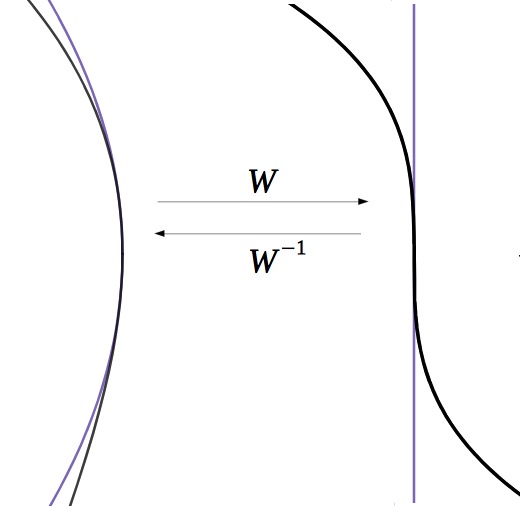}
    \end{center}
  \end{minipage}\hfill
  \begin{minipage}[l]{0.55\textwidth}
    \caption{The set $\{p_{1}(x,\xi)=0\}$ is flattened out to become $\{\xi_{1}=0\}$. The order of contact between the curves is preserved} \label{fig:wacts}
  \end{minipage}
\end{figure}
This is in fact what happens (see Proposition \ref{prop:newops}). Therefore to incorporate the fact that $h^{k+1}D^{k+1}_{x_{2}}v=O_{L^{2}}(h)$ we take a Fourier transform of $v$ in $x_{2}$ and dyadically decompose into regions $|\xi_{2}|\sim 2^{j}h^{\frac{1}{k+1}}$. 

Another way to think of this is to define
\begin{align*}
\FT[X](a,b,\xi_{2})&=\FT[X(a,b,\cdot)](\xi_{2})\\
&=\frac{1}{(2\pi h)^{\frac{1}{2}}}\int e^{-\frac{i}{h}\langle x_{2},\xi_{2}\rangle}\frac{1}{|a|^{1/2}}f(a^{-1}(x_{1}-b))v(x_{1},x_{2})dx_{2}dx_{2}.\end{align*}
Then
\begin{align}
u&=W^{-1}v\nonumber\\
&=W^{-1}\left[\frac{1}{(2\pi h)^{\frac{1}{2}}}\int e^{\frac{i}{h}\langle x_{2},\xi_{2}\rangle}\frac{1}{|a|^{5/2}}\FT[X](a,b,\xi_{2})f(a^{-1}(x_{1}-b))dadbd\xi_{2}\right]\nonumber\\
&=\frac{1}{(2\pi h)^{\frac{1}{2}}}\int \frac{1}{|a|^{5/2}}\FT[X](a,b,\xi_{2})W^{-1}\left[e^{\frac{i}{h}\langle x_{2},\xi_{2}}f(a^{-1}(x_{1}-b))\right]dadbd\xi_{2}\nonumber\\
&=\frac{1}{(2\pi h)^{\frac{1}{2}}}\int \frac{1}{|a|^{5/2}}\FT[X](a,b,\xi_{2})\psi_{a,b,\xi_{2}}(x_{1},x_{2})dadbd\xi_{2}\label{decomp}\end{align}
where
$$\psi_{a,b,\xi_{2}}=W^{-1}\left[e^{\frac{i}{h}\langle \cdot ,\xi_{2}\rangle}f(a^{-1}(\cdot -b))\right].$$
Here we are seeing \eqref{decomp} as synthesis for $u$ in terms of $\psi_{a,b,\xi_{2}}$. This can be understood as a specific example of more general phenomena. Suppose $\Lambda$ is a parameter space and $T_{v}(\lambda)$ is an analysis operator
$$T_{v}(\lambda)=\langle v,\phi_{\lambda}\rangle$$
with synthesis
$$v=\int T_{v}(\lambda)\phi_{\lambda}(x)d\mu(\lambda)$$
for some functions $\phi_{\lambda}$ and measure $\mu$ on $\Lambda$. Then if
$$(T\circ W)_{u}(\lambda)=T_{Wu}(\lambda)=\langle Wu,\phi_{\lambda}\rangle=\langle u,W^{\star}\phi_{\lambda}\rangle=\langle u,W^{-1}\phi_{\lambda}\rangle.$$ 
We can see $(T\circ W)_{u}(\lambda)$ as an analysis operator for $u$ with synthesis
$$u=\int (T\circ W)_{u}(\lambda)\psi_{\lambda}(x)d\mu(\lambda)$$
where $\psi_{\lambda}=W^{-1}(\phi_{\lambda})$. Therefore we may take  an appropriate analysis/synthesis of $L^{2}(\R^{n})$ and use $W^{-1}$ to produce a new analysis/synthesis that is adapted to the operator $p(x,hD)$.

\section{Quasimodes and joint quasimodes}\label{sec:QM}
The focus of this paper is on $u$ that satisfy
$$(-h^{2}\Delta_{g}-1)u=0$$
or some similar semiclassical equation. By working in coordinate charts and associating each patch with a patch on $\R^{n}$, we can write the operator $-h^{2}\Delta_{g}-1$ as a semiclassical quantisation of a symbol $p(x,\xi)$ which has principal symbol $|\xi|_{g}^{2}-1$. Here we use the left quantisation
\begin{equation}(-h^{2}\Delta_{g}-1)u=p(x,hD)u=\frac{1}{(2\pi h)^{n}}\int e^{\frac{i}{h}\langle x-y,\xi\rangle}p(x,\xi)u(y)dyd\xi.\label{pdef}\end{equation}
Since we must localise to make sense of \eqref{pdef} it is reasonable to only consider those $u$ which are semiclassically localised in phase space.

\begin{defn}\label{defn:localised}
We say that $u$ is semiclassically localised if there is a smooth, compactly supported function $\chi:T^{\star}M\to\R$ so that
$$u=\chi(x,hD)u+O(h^{\infty}).$$
\end{defn}

Localisation means that we will need to work with approximate solutions rather than exact ones. The commutation property for semiclassical pseudodifferential operators, Proposition \ref{prop:commute}, tells us that even if $p(x,hD)u=0$,
$$p(x,hD)\chi(x,hD)u=\chi(x,hD)p(x,hD)u+hr(x,hD)u=hr(x,hD)u$$
that is $\chi(x,hD)u$ is only an approximate solution with error $O_{L^{2}}(h\norm{u}_{L^{2}})$. Therefore it make sense to work with approximate solutions (quasimodes) with error $O_{L^{2}}(h\norm{u}_{L^2})$ from the start. 

\begin{defn}\label{defn:qm}
We say that $u$ is a quasimode of  order $h^{\beta}$ (sometimes written as $O_{L^{2}}(h^{\beta})$ or $O(h^{\beta})$)  of $p(x,hD)$ if
$$\norm{p(x,hD)u}_{L^{2}}\lesssim{}h^{\beta}\norm{u}_{L^{2}}.$$
If $u$ is a joint quasimode of order $h^{\beta}$  of $p_{1}(x,hD)$ and $p_{2}(x,hD)$ then
$$\norm{p_{i}(x,hD)u}_{L^{2}}\lesssim{}h^{\beta}\norm{u}_{L^{2}}\quad i=1,2.$$
\end{defn}

Definition \ref{defn:qm} is enough to produce the $L^{p}$ estimates for quasimodes considered in \cite{koch}, \cite{tacy09} and \cite{HTacy}. However for this work as in \cite{Tacy19} we will need a slightly stronger kind of quasimode. As discussed in \cite{Tacy19} if we start with a exact solution $u$ we could produce a quasimode $v$ by taking
$$v=u+hf$$
for some function $\norm{f}_{L^{2}}=1$. However such  examples are rather artificial. To avoid this, as in \cite{Tacy19}, we will work with strong quasimode. Strong quasimodes have the property  that repeated application of $p(x,hD)$ continues to improve the quasimode error. 

\begin{defn}\label{defn:strongqm}
We say that $u$ is a strong quasimode of order $h^{\beta}$ ($O^{str}_{L^{2}}(h^{\beta})$ or $O^{str}(h^{\beta})$)  of $p(x,hD)$ if
$$\norm{p^{M}(x,hD)u}_{L^{2}}\lesssim{}h^{\beta M}\norm{u}_{L^{2}},\quad M=1,2,\dots$$
If $u$ is a  strong joint quasimode of order $h^{\beta}$ of $p_{1}(x,hD)$ and $p_{2}(x,hD)$ then
\begin{equation}\norm{p^{M_{1}}_{1}(x,hD)\circ p^{M_{2}}_{2}(x,hD)u}_{L^{2}}\lesssim{}h^{\beta (M_{1}+M_{2})}\norm{u}_{L^{2}},\quad i=1,2\; M_{i}=1,2,\dots\label{jqmdef}\end{equation}
In some cases (where the comparison to $\norm{u}_{L^{2}}$ is important) we will write $O^{str}_{L^{2}}(h^{\beta}\norm{u}_{L^{2}})$ to indicate \eqref{jqmdef} holds. 
\end{defn}

Clearly an exact solution $u$,
$$p(x,hD)u=0$$
is a strong quasimode. As discussed in \cite{Tacy19} spectral clusters (a major example of quasimodes) are also strong quasimodes. A spectral cluster of window width $W=W(\lambda)$ is given by,
$$u=\sum_{\lambda_{j}\in[\lambda,\lambda-W(\lambda)]}c_{j}\phi_{j}\quad \text{with}\quad-\Delta \phi_{j}=\lambda_{j}^{2}\phi_{j}.$$
Such functions are strong order $W(h^{-1})h$ quasimodes of $(-h^{2}\Delta_{g}-1)$ when $h=\lambda^{-1}$.

We have seen that the commutation identity implies that the property of being an order $h$ quasimode is preserved under localisation.  That is, if $u$ is an order $h$ quasimode of $p(x,hD)$, $\chi(x,hD)u$ is also an $O_{L^{2}}(h)$ quasimode of $p(x,hD)$. This property also holds for strong joint quasimodes (see Proposition  1.4 in \cite{Tacy19}). If $u$ is a  strong joint quasimode of order $h$ of $p_{1}(x,hD)$ and $p_{2}(x,hD)$ then $\chi(x,hD)u$ is also a  strong joint quasimode of order $h$ of $p_{1}(x,hD)$ and $p_{2}(x,hD)$. Similarly, in Lemma \ref{lem:jqm}, we will see that if $u$ is a  strong joint quasimode of order $h$ of $p_{1}(x,hD)$ and $p_{2}(x,hD)$ it is also a  strong joint quasimode of order $h$ of $p_{1}(x,hD)$ with linear combinations of $p_{2}(x,hD)$ and $p_{1}(x,hD)$.

\begin{lemma}\label{lem:jqm}
Suppose $u$ is a strong joint  quasimode of order $h$ for $p_{1}(x,hD)$ and $p_{2}(x,hD)$ and $e_{1}(x,hD)$, $e_{2}(x,hD)$ are any semiclassical pseudodifferential operator with smooth symbols. Then $u$ is also a  strong joint quasimode  of order $h$ for any pairs consisting of elements of $$\{p_{1}(x,hD), p_{2}(x,hD), \left(p_{1}(x,hD)e_{1}(x,hD) + p_{2}(x,hD)e_{2}(x,hD)\right)\}.$$
\end{lemma}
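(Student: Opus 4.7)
The plan is to establish a single master estimate that handles all the required pairs simultaneously: for any $N\ge 0$ and any operator of the form
$$T = B_0 P_1 B_1 P_2 B_2 \cdots P_N B_N,$$
with each $P_i\in\{p_1(x,hD),\,p_2(x,hD)\}$ and each $B_i$ a semiclassical pseudodifferential operator of smooth, essentially compactly supported symbol, one has $\norm{Tu}_{L^{2}}\lesssim h^{N}\norm{u}_{L^{2}}$. Granting this, the lemma follows by distributivity: setting $q = p_1 e_1 + p_2 e_2$, for any pair $(A_1,A_2)\in\{p_1,p_2,q\}^{2}$ and any $M_1,M_2\ge 0$, expanding every occurrence of $q$ in $A_1^{M_1}A_2^{M_2}$ yields a finite sum of operators of the above form, each with exactly $N=M_1+M_2$ factors from $\{p_1,p_2\}$. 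The joint quasimode bound for the pair $(A_1,A_2)$ at order $(M_1,M_2)$ then reads off immediately.

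The master estimate I would prove by induction on $N$. The base $N=0$ is immediate since $B_0$ is bounded on $L^{2}$. For the step, the key input is Proposition \ref{prop:commute}: $[E,P]=hC$ for some pseudodifferential operator $C$, whenever $E$ and $P$ are pseudodifferential operators with smooth symbols. Using this, I push $B_0$ rightward past $P_1$, then $B_0B_1$ past $P_2$, and so on, rewriting $T$ as a main term $\tilde B\cdot P_1P_2\cdots P_N$ (with $\tilde B$ a bounded product of the $B_i$) plus a finite sum of error terms each of the form $h^{k}$ times an operator with exactly $N-k$ factors from $\{p_1,p_2\}$. The inductive hypothesis applied at order $N-k$ bounds each error applied to $u$ by $h^{k}\cdot h^{N-k}\norm{u}_{L^{2}}=h^{N}\norm{u}_{L^{2}}$. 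For the main term, $\norm{\tilde B P_1\cdots P_N u}_{L^{2}}\lesssim\norm{P_1\cdots P_N u}_{L^{2}}$; rearranging $P_1\cdots P_N$ into the canonical order $p_1^{M_1'}p_2^{M_2'}$ (via $[p_1,p_2]=h\cdot(\text{pseudodifferential operator})$, and absorbing the generated lower-order commutator errors by the same induction) reduces this to the standing hypothesis through Definition \ref{defn:strongqm}.

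The main obstacle is the combinatorial bookkeeping: one must track every error term generated by a commutation and verify that each factor of $h$ gained is matched by the loss of one $p$-factor, so that the inductive hypothesis absorbs each error at the correct power of $h$. A secondary concern is checking that the hypotheses of Proposition \ref{prop:commute} apply at each step; this requires all symbols in sight to be smooth and essentially compactly supported, which is inherited from the admissibility assumptions on $p_1,p_2$, the smoothness of $e_1,e_2$, and the semiclassical localisation of $u$ (Definition \ref{defn:localised}).
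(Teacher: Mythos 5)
Your argument is essentially the paper's own proof: the paper likewise invokes the commutation identity to write $q(x,hD)p(x,hD)=p(x,hD)q(x,hD)+hr(x,hD)$ with $r$ bounded on $L^{2}$, expands the powers of $p_{1}(x,hD)e_{1}(x,hD)+p_{2}(x,hD)e_{2}(x,hD)$, and collects the result as $\sum h^{m}r_{j,m}(x,hD)p_{1}^{j}(x,hD)p_{2}^{M-m-j}(x,hD)$, so that each power of $h$ gained is paired with a lost $p$-factor before applying Definition \ref{defn:strongqm} — exactly your master estimate. Your induction on the number of $p$-factors is simply a more systematic bookkeeping of the paper's ``expanding and commuting terms as necessary,'' so the proposal is correct and takes the same route as the paper.
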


\begin{proof}
Given any composition $q(x,hD)p(x,hD)$ the semiclassical calculus allows us to commute with loss of an $O_{L^{2}}(h)$ term (Proposition \ref{prop:com}) so
$$q(x,hD)p(x,hD)=p(x,hD)q(x,hD)+hr(x,hD)$$
where $r(x,hD)$ maps $L^{2}\to L^{2}$ with norm bounded independent of $h$.  Expanding 
$$(p_{1}(x,hD)e_{1}(x,hD) + p_{2}(x,hD)e_{2}(x,hD))^{M}$$
 and commuting terms as necessary we have that
$$(p_{1}(x,hD)e_{1}(x,hD)+p_{2}(x,hD)e_{2}(x,hD))^{M}=\sum_{m=0}^{M}\sum_{j=0}^{M-m}h^{m}r_{j,m}(x,hD)p_{1}(x,hD)^{j}p_{2}(x,hD)^{M-m-j}$$
where all the $r_{j,m}(x,hD)$ map $L^{2}\to L^{2}$ with norm bounded independent of $h$. So $u$ is indeed a  strong joint  quasimode of order $h$ of $ p_{1}(x,hD)e_{1}(x,hD)+p_{2}(x,hD)e_{2}(x,hD) $ and either $p_{1}(x,hD)$ or $p_{2}(x,hD)$. 
\end{proof}

Now suppose we have the hypotheses of Theorem \ref{thm:main}. That is, $u$ is a  strong joint quasimode of order $h$ of $p_{1}(x,hD)$ and $p_{2}(x,hD)$. We will see that, as in \cite{Tacy19}, it is enough to estimate $\norm{\chi(x,hD)u}_{L^{p}}$ where $\chi(x,\xi)$ is a smooth function supported in a small (but $h$ independent) region about a point $(x_{0},\xi_{0})$,
$$(x_{0},\xi_{0})\in\bigcap_{i=1}^{2}\left\{(x,\xi)\mid p_{i}(x,\xi)=0\right\}.$$
To see that such a localisation is valid  consider $\chi(x,hD)u$ where $\chi(x,\xi)$ is supported near a point $(x_{0},\xi_{0})$ such that $p_{i}(x_{0},\xi_{0})\neq{}0$.   From Proposition \ref{prop:invert} we know that if $|p_{i}(x,\xi)|>c>0$, the operator $p_{i}(x,hD)$ is invertible and its inverse $(p_{i}(x,hD))^{-1}$ has bounded mapping norm $L^{2}\to{}L^{2}$. By choosing the support of $\chi$ small enough we may assume that $p_{i}(x,\xi)$ is bounded away from zero on the support of $\chi$ and therefore so is $p^{M}_{i}(x,\xi)$. Proposition \ref{prop:com} tells us that $p^{M}_{i}(x,\xi)$ is the principal symbol of $p^{M}_{i}(x,hD)$ so by Proposition \ref{prop:invert} we can produce an inverse $(p^{M}_{i}(x,hD))^{-1}$. Therefore if 
$$p^{M}_{i}(x,hD)\chi(x,hD)u=h^{M}f\quad\norm{f}_{L^{2}}\lesssim \norm{u}_{L^{2}},$$
we can invert $p_{i}(x,hD)$ to obtain
$$\chi(x,hD)u=h^{M}(p^{M}(x,hD))^{-1}f$$
and
 $$\norm{\chi(x,hD)u}_{L^{2}}\lesssim h^{M}\norm{u}_{L^{2}}.$$ 
 By applying semiclassical Sobolev estimates \cite{Zworski12} we obtain
$$\norm{\chi(x,hD)u}_{L^{p}}\lesssim{}h^{-\frac{n}{2}+\frac{n}{p}+M}\norm{u}_{L^{2}}.$$
Choosing $M$ large enough we obtain better estimates than those of Theorem \ref{thm:main}. So we need only consider $\chi(x,hD)u$ where $\chi(x,\xi)$ is supported in a neighbourhood of some point $(x_{0},\xi_{0})$ where both of  the $p_{i}(x_{0},\xi_{0})=0$. We may as well assume this point is $(0,0)$.

Assume that $p_{1}(x,\xi)$ satisfies the curvature condition.   Since $\{\xi\mid p_{1}(x_{0},\xi)\}$ is a smooth hypersurface in $\R^{2}$ we may assume that (after a suitable change of coordinate system) that $\partial_{\xi_{1}}p_{1}(0,0)\neq 0$. By taking a suitable localisation we can extend this to the property that $\partial_{\xi_{1}}p_{1}(x,\xi)$ is bounded away from zero on the support of $\chi(x,\xi)$. Therefore we may write
$$p_{1}(x,\xi)=e_{1}(x,\xi)(\xi_{1}-a(x,\xi_{2}))$$
where $|e_{1}(x,\xi)|>c>0$. So the invertibility of $e_{1}(x,hD)$ ensures that $u$ is also a quasimode of $(hD_{x_{1}}-a(x,hD_{x_{2}}))$. Since the characteristic sets of meet with contact of at least order one at $(0,0)$ we can also factorise $p_{2}(x,\xi)$ as
$$p_{2}(x,\xi)=e_{2}(x,\xi)(\xi_{1}-q(x,\xi_{2}))$$
where $|e_{2}(x,\xi)|>c>0$. Therefore (since $e_{2}(x,hD)$ is invertible) $u$ is a quasimode of $(hD_{x_{1}}-q(x,hD_{x_{2}}))$.
Now let $W(x_{1}):L^{2}(\R) \to L^{2}(\R)$ be the operator so that
\begin{equation}hD_{x_{1}}W(x_{1})=-W(x_{1})a(x,hD_{x_{2}})\label{Wdef}\end{equation}
or equivalently 
$$(hD_{x_{1}}-a(x,hD_{x_{2}}))W^{\star}=0.$$
We can write (see Proposition \ref{prop:parametrix})
$$Wg=\frac{1}{2\pi h}\int e^{\frac{i}{h}\left( x_{2}\xi_{2}-\phi(x_{1},y_{2},\xi_{2})\right)}b(x_{1},y_{2},\xi_{2})g(y_{2})d\xi_{2}dy_{2}$$
where 
$$\partial_{x_{1}}\phi+a(x_{1},y_{2},\partial_{y_{2}}\phi)=0\quad\phi(0,y_{2},\xi_{2})=\langle y_{2},\xi_{2}\rangle$$
and
$$b(0,y_{2},\xi_0)=1.$$
It is a classical result (see Zworski \cite{Zworski12} Part 3, particularly Chapters 10-11) that
$$W^{\star}(x_{1})W(x_{1})=\Id\quad\text{and}\quad W(x_{1})W^{\star}(x_{1})=\Id.$$
Let 
\begin{equation}v(x_{1},x_{2})=W(x_{1})u(x_{1},\cdot)\label{vdef}\end{equation}
 then
$$u=W^{\star}v.$$
So to obtain Theorem \ref{thm:main} we need to to obtain a bound
$$\norm{W^{\star}v}_{L^{p}}\lesssim h^{-\delta(n,p,k)}\norm{u}_{L^{2}}.$$
First we see that $v$ is a strong joint quasimode of $hD_{x_{1}}$ and $h^{k+1}D^{k+1}_{x_{2}}$. 

\begin{prop}\label{prop:newops}
Suppose $W(x_{1})$ and $v(x_{1},x_{2})$ are given by \eqref{Wdef} and  \eqref{vdef} respectively. Then $v$ is a  strong joint  quasimode of order $h$ of the operators $hD_{x_{1}}$ and $h^{k+1}D^{k+1}_{x_{2}}$. 
\end{prop}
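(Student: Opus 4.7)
The plan has two parts, corresponding to the two operators, tied together by the semiclassical calculus and two applications of Lemma \ref{lem:jqm}.

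For the operator $hD_{x_1}$, I would use the defining intertwining $hD_{x_1}W(x_1) = -W(x_1)a(x,hD_{x_2})$, together with the product rule in $x_1$ applied to $v(x_1, x_2) = W(x_1)u(x_1, \cdot)(x_2)$, to compute $hD_{x_1} v = W[(hD_{x_1} - a(x,hD_{x_2}))u]$, and iterate to obtain $(hD_{x_1})^M v = W(hD_{x_1} - a)^M u$ for all $M \geq 1$. Unitarity of $W$ then gives $\norm{(hD_{x_1})^M v}_{L^2} = \norm{(hD_{x_1} - a)^M u}_{L^2}$. The factorization $p_1 = e_1(x,hD)(hD_{x_1} - a)$ with $e_1$ invertible (Proposition \ref{prop:invert}), combined with the semiclassical composition rules (Proposition \ref{prop:com}), converts the strong quasimode property of $u$ for $p_1$ into one for $(hD_{x_1} - a)$, modulo $O(h)$ commutator errors absorbed into the strong quasimode bounds on lower powers of $p_1$. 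This yields the strong quasimode property of $v$ for $hD_{x_1}$, and more generally the identity $\norm{(hD_{x_1})^{M_1}\tilde{Q}^{M_2}v}_{L^2} = \norm{(hD_{x_1} - a)^{M_1} Q^{M_2} u}_{L^2}$ for any operator $Q$ with $\tilde{Q} = WQW^\star$.

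Next I would apply Lemma \ref{lem:jqm} on the $u$-side: with combination coefficients $-e_1^{-1}$ and $e_2^{-1}$ the combination $Q = -p_1 e_1^{-1} + p_2 e_2^{-1}$ has principal symbol $a-q$, so $u$ is a strong joint quasimode of $p_1$ and $Q$. Setting $\tilde{Q} = WQW^\star$ and using the identity at the end of the previous paragraph, $v$ becomes a strong joint quasimode of $hD_{x_1}$ and $\tilde{Q}$.

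The main step, and expected obstacle, is then identifying $\tilde{Q}$ with $h^{k+1}D_{x_2}^{k+1}$ up to invertible factors and $hD_{x_1}$-corrections. By semiclassical Egorov $\sigma(\tilde{Q}) = (a-q)\circ\kappa^{-1}$, where $\kappa$ is the canonical transformation underlying $W$ and sending $\xi_1 - a$ to $\tilde{\xi}_1$. The $k$th-order contact condition together with the admissibility hypothesis localises as $a - q = \xi_2^{k+1}r(x,\xi_2)$ with $r$ nonvanishing at the origin; since $\kappa$ is a symplectomorphism preserving contact order and maps $\{\xi_1 - a = 0\}$ to $\{\tilde{\xi}_1 = 0\}$, the restriction of $\sigma(\tilde{Q})$ to $\{\tilde{\xi}_1 = 0\}$ vanishes to order $k+1$ in $\tilde{\xi}_2$ at the contact point. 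After normalising coordinates so the contact point sits at the origin, this gives $\sigma(\tilde{Q}) = \tilde{\xi}_2^{k+1}\tilde{r}(\tilde{x},\tilde{\xi}) + \tilde{\xi}_1\tilde{s}(\tilde{x},\tilde{\xi})$ with $\tilde{r}$ bounded away from zero, equivalently $\tilde{Q} = h^{k+1}D_{x_2}^{k+1}\tilde{R} + hD_{x_1}\tilde{S} + O(h)$ with $\tilde{R}$ invertible (Proposition \ref{prop:invert}). A second application of Lemma \ref{lem:jqm}, now on the $v$-side with the combination $\tilde{Q} - hD_{x_1}\tilde{S}$ eliminating the $hD_{x_1}$ part, followed by dividing out the invertible $\tilde{R}$ via the same iteration as in the first paragraph, delivers the claimed strong joint quasimode property of $v$ for the pair $(hD_{x_1}, h^{k+1}D_{x_2}^{k+1})$.
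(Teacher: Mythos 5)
Your proposal is correct and follows essentially the same route as the paper: the exact intertwining $(hD_{x_1})^{M}Wg=W(hD_{x_1}-a)^{M}g$ plus unitarity of $W(x_1)$, transfer of the strong joint quasimode property from $p_{1},p_{2}$ to $(hD_{x_1}-a)$ and an operator with principal symbol $a-q$ via the elliptic factorisations and Lemma \ref{lem:jqm}, Egorov's theorem for the conjugation by $W$, preservation of the order of contact so that the conjugated symbol is $\xi_{2}^{k+1}$ times an elliptic factor (possibly plus a $\xi_{1}$ term), and a final application of Lemma \ref{lem:jqm} to strip off that factor. The only substantive difference is presentational: you invoke invariance of contact order under the canonical transformation abstractly, whereas the paper verifies it directly by differentiating $\tilde a-\tilde q$ along the Hamiltonian flow of $a$ at the contact point.
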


\begin{proof}

Note that
\begin{align*}
(hD_{x_{1}})^{M}v&=hD_{x_{1}}\left(W(x_{1})u(x_{1},\cdot)\right)\\
&=W(x_{1})\left(hD_{x_{1}}-a(x,hD_{x_{2}}\right)^{M}u.\end{align*}
Therefore since $W(x_{1})$ is unitary
$$\norm{(hD_{x_{1}})^{M}v}_{L^{2}_{x_{2}}}\lesssim \norm{(hD_{x_{1}}-a(x,hD_{x_{2}}))^{M}u}_{L^{2}_{x_{2}}}.$$
Using the fact that $v$ is localised to an $O(1)$ region gives us
$$\norm{(hD_{x_{2}})^{M}v}_{L^{2}}\lesssim h^{M}\norm{u}_{L^{2}}.$$
So $v$ is certainly a strong quasimode of order $h$ of $hD_{x_{1}}$. 

Recall that the factorisation of $p_{2}(x,\xi)$ ensures that $u$ is a quasimode of 
$$(hD_{x_{1}}-q(x,hD_{x_{2}}))u=0.$$
We have assumed that we are localised about the point $(0,0)$ where both $p_{1}(0,0)=0=p_{2}(0,0)$, which implies $a(0,0)=0=q(0,0)$. Recall  that $W^{\star}(x_{1})$ is the propagator for the time evolution equation
$$(hD_{x_{1}}-a(x,hD_{x_{2}}))W^{\star}(x_{1})=0$$
where $x_{1}$ acts as the time variable.  Define the classical system
$$\begin{cases}
\dot{x}_{2}(x_{1})=\partial_{\xi_{2}} a(x_{1},x_{2},\xi_{2})\\
x_{2}(0)=x_{2}\\
\dot{\xi}_{2}(x_{1})=-\partial_{x_{2}}a(x_{1},x_{2},\xi_{2})\\
\xi_{2}(0)=\xi_{2}\end{cases}$$
and let
\begin{align*}
\tilde{q}(x,\xi_{2})&=q(x_{1},x_{2}(x_{1}),\xi_{2}(x_{1}))\\
\tilde{a}(x,\xi_{2})&=a(x,x_{2}(x_{1}),\xi_{2}(x_{1})).\end{align*}
 Egorov's theorem tells us that
\begin{align*}
\tilde{q}(x,hD_{x_{2}})&=W(x_{1})q(x,hD_{x_{2}})W^{\star}(x_{1})+hr_{1}(x,hD)\\
&\text{and}\\
\tilde{a}(x,hD_{x_{2}})&=W(x_{1})a(x,hD_{x_{2}})
)W^{\star}(x_{1})+hr_{2}(x,hD),\end{align*}
where both $r_{i}(x,hD):L^2\to L^{2}$ are bounded independent of $h$. So if we let
$$\tilde{p}_{2}(x,\xi)=\xi_{1}+\tilde{a}(x,\xi_{2})-\tilde{q}(x,\xi_{2})$$
then
\begin{align*}(hD_{x_{1}})^{M_{1}}(\tilde{p}_{2}(x,hD))^{M_{2}}v&=(hD_{x_{1}})^{M_{1}}(\tilde{p}_{2}(x,hD))^{M_{2}-1}W(hD_{x_{1}}\!\!-\!q(x,hD_{x_{2}})u\\
&\qquad+h(hD_{x_{1}})^{M_{1}}p_{2}(x,hD)^{M_{2}-1}r_{1}(x,hD)W(x_{1})u\\
&\quad\vdots\\
&=\sum_{j=0}^{M_{2}}h^{j}(hD_{x_{1}})^{M_{1}}R_{j}(x,hD)W(hD_{x_{1}}-q(x,hD_{x_{2}}))^{M_{2}-j}u\\
&=\sum_{j=0}^{M_{2}}h^{j}(hD_{x_{1}})^{M_{1}-1}R_{j}(x,hD)W(hD_{x_{1}}\!\!-\!a(x,hD_{x_{2}}))(hD_{x_{1}}\!\!-\!q(x,hD_{x_{2}}))^{M_{2}-j}u\\
&\qquad+\sum_{j=0}^{M_{2}}h^{j+1}(hD_{x_{1}})^{M_{1}-1}R_{j,1}(x,hD)W(hD_{x_{1}}\!\!-\!q(x,hD_{x_{2}}))^{M_{2}-j}u\\
&\quad\vdots\\
&=\sum_{m=0}^{M_{1}}\sum_{j=0}^{M_{2}}h^{j+m}R_{j,m}(x,hD)(hD_{x_{1}}\!\!-\!a(x,hD_{x_{2}}))^{M_{1}-m}(hD_{x_{1}}\!\!-\!q(x,hD_{x_{2}})^{M_{2}-j}u\end{align*}
where all the pseudodifferential operators $R_{j,m}(x,hD)$ are bounded $L^{2}\to L^{2}$ independent of $h$. Therefore
$$\norm{(hD_{x_{1}})^{M_{1}}(\tilde{p}_{2}(x,hD))^{M_{2}}v}_{L^{2}}\lesssim h^{M_{1}+M_{2}}\norm{u}_{L^{2}}$$
and $v$ is a strong joint quasimode of order $h$  of $\tilde{p}_{2}(x,\xi)$ and $hD_{x_{1}}$. Since $a(0,0)=0=q(0,0)$ we have that $\tilde{p}_{2}(0,0)=0$.  Consider 
$$\frac{\partial^{r}}{\partial\xi_{2}^{r}}\tilde{p}_{2}(x,\xi_{2})=\frac{\partial^{r}(a-q)}{\partial\xi_{2}}(x_{1},x_{2}(t),\xi_{2}(t))\left(\frac{\partial\xi_{2}(x_{1})}{\partial\xi_{2}}\right)^{r}+R(x_{1},x_{2},\xi_{2})$$
where $R(x_{1},x_{2},\xi_{2})$ is a sum of terms all of which have a factor of
\begin{equation}\frac{\partial^{\gamma}x_{2}(x_{1})}{\partial\xi^{\gamma}_{2}}\Big|_{(x,\xi)=(0,0)}\label{0deriv1}\end{equation}
with $\gamma\leq{}r$ or a factor of
\begin{equation}\frac{\partial^{\beta}\xi_{2}(x_{1})}{\partial\xi_{2}^{\beta}}\Big|_{(x,\xi)=(0,0)}\label{0deriv2}\end{equation}
for $2\leq{}\beta\leq r$. Now
$$x_{2}(x_{1})=x_{2}+O(|x_{1}|)\quad\text{and}\quad \xi_{2}=\xi_{2}+O(|x_{1}|).$$ 
So at $x_{1}=0$ all of the factors of the form \eqref{0deriv1} and \eqref{0deriv2} are zero and
$$\frac{\partial\xi_{2}(x_{1})}{\partial\xi_{2}}\Big|_{x_{1}=0}=1.$$
Since the derivatives of $a$ and $q$ agree up to the $k$th derivative (but not at the $k+1$st derivative) we have that
$$\frac{\partial^{r}}{\partial\xi_{2}^{r}}\tilde{p}_{2}(x,\xi_{2})\Big|_{(0,0)}=\begin{cases}
0&1\leq r\leq k\\
c\neq 0 &r=k+1.\end{cases}$$ 
That is the order of contact is preserved. We can therefore write
$$\tilde{p}_{2}(x,\xi)=\xi_{1}-\xi_{2}^{k+1}g(x,\xi_{2})$$
for some $g(x,\xi_{2})$ with the property that $g(0,0)\neq 0$. Therefore
$$\tilde{p}_{2}(x,hD)=hD_{x_{1}}-(h^{k+1}D_{x_{1}}^{k+1})G(x,hD_{x_{2}})$$
where
$$G(x,\xi_{2})=g(x,\xi_{2})+O(h).$$
That is $G(x,hD_{x_{2}})$ is invertible. Now by Lemma \ref{lem:jqm} with $e_{1}(x,hD)=\Id$ and $e_{2}(x,hD)=-\Id$,  $v$ is a strong joint quasimode of order $h$  of $hD_{x_{1}}$ and $h^{k+1}D_{x_{2}}^{k+1}G(x,hD_{x_{2}}).$ A second application of Lemma \ref{lem:jqm} with $e_{1}(x,hD)=0$ and $e_{2}(x,hD)=G^{-1}(x,hD_{x_{2}})$ gives that $v$ is a strong joint quasimode of order $h$ of $hD_{x_{1}}$ and $h^{k+1}D_{x_{2}}^{k+1}$. 
\end{proof}

\section{Proof of Theorem \ref{thm:main}}
We are now in a position to prove Theorem \ref{thm:main}. We have that
$$u=W^{\star}(x_{1})v$$
where $v$ is a  strong joint quasimode of order $h$ of both $hD_{x_{1}}$ and $h^{k+1}D^{k+1}$. Therefore we expect the support of $\FT[v]$ to sit in a $h\times h^{\frac{1}{k+1}}$ about $\xi=0$. To incorporate the property  that $v$ is a $hD_{x_{1}}$ quasimode we decompose $v$ with a continuous wavelet transform in $x_{1}$. That is,
\begin{equation}v(x_{1},x_{2})=\frac{1}{C_{f}}\int \frac{1}{|a|^{5/2}}X_{v}(a,b,x_{2})f(a^{-1}(x_{1}-b))dadb\label{vint}\end{equation}
where
\begin{equation}X_{v}(a,b,x_{2})=\frac{1}{|a|^{1/2}}\int f(a^{-1}(y_{1}-b))v(y_{1},x_{2})dy_{1}\label{Xvdef}\end{equation}
and $f$ is a wavelet satisfying
$$\int f(\tau)d\tau=0\quad\text{and}\quad C_{f}=\int \frac{|\hat{f}(\xi)|^{2}}{|\xi|}d\xi<\infty.$$
Since $v$ is an order $h$ quasimode of $hD_{x_{1}}$ we would expect the major contributions to \eqref{vint} to come from the region where $|a|\sim 1$, and indeed (in Theorem \ref{thm:Wint}) we find this is the case. 

When we incorporate this decomposition we obtain
\begin{align}
W^{\star}(x_{1})v&=\frac{1}{C_{f}2\pi h}\int e^{\frac{i}{h}\left( \phi(x_{1},x_{2},\xi_{2})-y_{2}\xi_{2}\right)}b(x_{1},y_{2},\xi_{2})\frac{1}{|a|^{5/2}}X_{v}(a,b,x_{2})f(a^{-1}(x_{1}-b))dadbd\xi_{2}dy_{2}\label{decompW}\\
&=\frac{1}{C_{f}2\pi h}\int e^{\frac{i}{h}\phi(x_{1},x_{2},\xi_{2})}b(x_{1},y_{2},\xi_{2})f(a^{-1}(x_{1}-b))\FT\left[X_{v}\right](a,b,\xi_{2})d\xi_{2}dadb\nonumber\end{align}
where 
\begin{equation}\FT\left[X_{v}\right](a,b,\xi_{2})=\FT\left[X(a,b,\cdot)\right](\xi_{2})=\frac{1}{(2\pi h)^{\frac{1}{2}}}\int e^{-\frac{i}{h}x_{2}\xi_{2}}X_{v}(a,b,x_{2})dx_{2}.\label{FTXvdef}\end{equation}
We re-write \eqref{decompW} as
$$W^{\star}(x_{1})v=\frac{1}{C_f}\int \frac{1}{|a|^{5/2}}W_{a}^{\star}(x_{1})\left[\FT[X_{v}](a,\cdot,\cdot)\right]da$$
where
$$W_{a}^{\star}(x_{1})G=\frac{1}{(2\pi h)^{\frac{1}{2}}}\int e^{\frac{i}{h}\phi(x_{1},x_{2},\xi_{2})}b(x_{1},y_{x},\xi_{2})f(a^{-1}(x_{1}-b))G(\xi_{2},b)d\xi_{2}db.$$
Now we want to include the property that $v$ is also a quasimode of $h^{k+1}D^{k+1}$. Since we expect this restriction to require that $\FT[v]$ is supported mainly in the region $|\xi_{2}|\leq h^{\frac{1}{k+1}}$ we decompose $W_{a}^{\star}$ dyadically into regions with $|\xi_{2}|\sim 2^{j}h^{\frac{1}{k+1}}$. To that end choose $\chi_{0}:\R\to\R$ supported in $[-2,2]$  and $\chi$ supported in $[\frac{1}{2},\frac{3}{2}]$ so that
$$1=\chi_{0}(h^{-\frac{1}{k+1}}|\xi_{2}|)+\sum_{j=1}^{J}\chi(2^{-j}h^{-\frac{1}{k+1}}|\xi_{2}|)\quad\text{where}\quad 2^{J}h^{\frac{1}{k+1}}=1.$$
Then let
\begin{equation}W_{a,j}^{\star}(x_{1})G=\frac{1}{(2\pi h)^{\frac{1}{2}}}\int e^{\frac{i}{h}\phi(x_{1},x_{2},\xi_{2})}b(x_{1},y_{x},\xi_{2})f\left(\frac{x_{1}-b}{a}\right)\chi\left(\frac{|\xi_{2}|}{2^{j}h^{\frac{1}{k+1}}}\right)G(\xi_{2},b)d\xi_{2}db\label{wajdef}\end{equation}
\begin{equation}
W_{a,0}^{\star}(x_{1})G=\frac{1}{(2\pi h)^{\frac{1}{2}}}\int e^{\frac{i}{h}\phi(x_{1},x_{2},\xi_{2})}b(x_{1},x_{2},\xi_{2})f\left(\frac{x_{1}-b}{a}\right)\chi_{0}\left(\frac{|\xi_{2}|}{h^{k+1}}\right)G(\xi_{2},b)d\xi_{2}db.\label{wa0def}\end{equation}
We will proceed by proving $L^{2}\to L^{p}$ estimates for each $W_{a,j}^{\star}$, (in Theorem \ref{thm:Waest}), and then $L^{2}$ bounds for 
$$\FT[X_{v}^{j}]=\chi(2^{-j}h^{-\frac{1}{1+k}})\FT[X_{v}^{j}],$$
(in Propositions \ref{prop:Xvest} and \ref{prop:largea}). Finally in Theorem \ref{thm:Wint} we combine this information to obtain the estimates on $W(x_{1})^{\star}v$ necessary to prove Theorem \ref{thm:main}.

\begin{thm}\label{thm:Waest}
Suppose $W_{a,j}^{\star}(x_{1})$ and $W_{a,0}^{\star}(x_{1})$ are given by \eqref{wajdef} and \eqref{wa0def}. Then for $a\leq 2^{-2j}h^{1-\frac{2}{k+1}}$,
\begin{equation}\norm{W_{a,j}(\cdot)G}_{L^{p}}\lesssim a^{\frac{1}{2}+\frac{1}{p}}h^{\left(-1+\frac{1}{k+1}\right)\left(\frac{1}{2}-\frac{1}{p}\right)}2^{j\left(\frac{1}{2}-\frac{1}{p}\right)} \norm{G}_{L^{2}}.\label{Wsajopbnd}\end{equation}
For $a\geq{}2^{-2j}h^{1-\frac{2}{k+1}}$,
\begin{equation}\norm{W_{a,j}(\cdot)G}_{L^{p}}\lesssim a^{\frac{1}{2}} 2^{\mu(p,j)}h^{-\delta(p,k)}\norm{G}_{L^{2}}\label{Wbajopbnd}\end{equation}
where 
\begin{align*}
\delta(p,k)&=\begin{cases}
\left(\frac{1}{2}-\frac{2}{p}\right)-\frac{1}{k+1}\left(\frac{1}{2}-\frac{3}{p}\right)&6\leq p\leq\infty\\
\frac{1}{4}-\frac{1}{2p}&2\leq p\leq 6,\end{cases}\\
&\text{and}\\
\mu(p,j)&=\begin{cases}
j\left(\frac{1}{2}-\frac{3}{p}\right)&6\leq{}p\leq \infty\\
0 & 2\leq p\leq 6.\end{cases}\end{align*}
\end{thm}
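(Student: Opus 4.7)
The plan is to interpolate endpoint $L^2\to L^p$ estimates at $p=2$, $p=\infty$, and (in the large-$a$ regime) the Carleson--Sj\"olin endpoint $p=6$. Since $\{p_1=0\}$ has non-degenerate second fundamental form, the curve $\xi_1=a(x,\xi_2)$ has non-vanishing curvature, which is inherited by the phase $\phi$ of $W$ in the form of a non-degenerate Hessian $\partial^2_{\xi_2}\phi(x_1,x_2,\xi_2)$ of size comparable to $|x_1|$ on the relevant region (this follows from the fact that $\phi(0,x_2,\xi_2) = \langle x_2,\xi_2\rangle$ is flat, while the eikonal equation forces $\partial_{x_1}\partial^2_{\xi_2}\phi\big|_{x_1=0}=\partial^2_{\xi_2}a\neq 0$). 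This curvature supplies all the Sogge-type gains.

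The threshold $a\sim 2^{-2j}h^{1-2/(k+1)}$ has a natural interpretation: across the $\xi_2$-support $|\xi_2|\sim 2^j h^{1/(k+1)}$ the total variation of $\phi/h$ is $|x_1|\cdot 2^{2j}h^{2/(k+1)-1}$, which is $O(1)$ precisely when $|x_1|\lesssim 2^{-2j}h^{1-2/(k+1)}$. For $a$ below this threshold the phase oscillates by less than a full period on the support of the cutoffs, so stationary phase gains nothing and the estimates must be read off by absolute values; above the threshold genuine oscillation occurs and curvature may be exploited. In the small-$a$ regime I would bound $|W_{a,j}^\star G|$ by pulling absolute values inside, producing the $L^2\to L^\infty$ endpoint after using the length $\sim 2^j h^{1/(k+1)}$ of the $\xi_2$-window, the $L^1$-size $\|f((\cdot-b)/a)\|_{L^1}\sim a$, the $h^{-1/2}$ normalisation, and Cauchy--Schwarz in $(\xi_2,b)$. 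The $L^2\to L^2$ endpoint follows from Plancherel in $x_2$ using the unique stationary point of $\phi$ in $\xi_2$, combined with the $a^{1/2}$ contribution of the wavelet cutoff in $x_1$. Interpolation between these two endpoints produces \eqref{Wsajopbnd}.

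In the large-$a$ regime I would run a $TT^\star$ argument. The kernel of $W_{a,j}^\star(W_{a,j}^\star)^\star$ factors into an $x_1$-convolution piece (arising from the $b$-integral, contributing the $a$ factor) and an oscillatory $\xi_2$-integral with phase $(\phi(x_1,x_2,\xi_2)-\phi(y_1,y_2,\xi_2))/h$ on the band $|\xi_2|\sim 2^j h^{1/(k+1)}$. Rescaling $\xi_2=2^j h^{1/(k+1)}\eta$ and $x_2$ inversely converts this into a unit-scale oscillatory integral with a large parameter $\Lambda\sim a\cdot 2^{2j}h^{2/(k+1)-1}\gtrsim 1$ whose phase has a non-degenerate mixed Hessian of Carleson--Sj\"olin type. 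Stationary phase in $\eta$ delivers the $L^2\to L^\infty$ endpoint, the two-dimensional Carleson--Sj\"olin theorem for extension along a curve with non-vanishing curvature gives the $L^2\to L^6$ endpoint, and Plancherel gives the $L^2\to L^2$ endpoint. Unwinding the rescaling and interpolating at $p=2$, $6$, and $\infty$ produces \eqref{Wbajopbnd} with the stated $\delta(p,k)$ and $\mu(p,j)$.

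The main obstacle I expect is the careful bookkeeping of the three interacting scales --- the wavelet scale $a$ in $x_1$, the dyadic scale $2^j h^{1/(k+1)}$ in $\xi_2$, and the semiclassical scale $h$ --- so that the rescaling and interpolation produce exactly the exponents of $2^j$, $h$, and $a$ claimed. The low-frequency operator $W_{a,0}^\star$ is a minor variant: $\partial^2_{\xi_2}\phi$ need not be uniformly invertible near $\xi_2=0$, but the window $|\xi_2|\lesssim h^{1/(k+1)}$ is narrow enough that the phase is non-oscillatory across its support regardless of $a$, so the small-$a$ argument applies throughout and gives a bound of the same form as the $j=0$ case of \eqref{Wsajopbnd}.
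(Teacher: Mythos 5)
Your overall architecture (endpoint bounds at $p=2,6,\infty$ plus interpolation, with the threshold $a\sim 2^{-2j}h^{1-\frac{2}{k+1}}$ marking the onset of oscillation) is consistent with the paper, and granting your three endpoints the interpolation does reproduce $\delta(p,k)$ and $\mu(p,j)$ exactly. The genuine gap is in the large-$a$ regime. Your $TT^{\star}$ kernel analysis freezes the oscillation at its maximal value $\Lambda\sim a\,2^{2j}h^{\frac{2}{k+1}-1}$, but the $\xi_{2}$-Hessian of the phase difference $\phi(x_{1},x_{2},\xi_{2})-\phi(z_{1},z_{2},\xi_{2})$ is of size $|x_{1}-z_{1}|$ and degenerates as $z_{1}\to x_{1}$ inside the same kernel, so there is no single unit-scale rescaling. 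The paper keeps this dependence: it proves $\norm{W_{a,j}^{\star}(x_{1})W_{a,j}(z_{1})}_{L^{1}\to L^{\infty}}\lesssim a2^{j}h^{-1+\frac{1}{k+1}}$ below the threshold and $\lesssim ah^{-1/2}|x_{1}-z_{1}|^{-1/2}$ above it, interpolates with the $L^{2}\to L^{2}$ bound $\lesssim a$, and then resolves the $|x_{1}-z_{1}|$ integral by Young's inequality, with Hardy--Littlewood--Sobolev needed precisely at $p=6$ where $|x_{1}-z_{1}|^{-(p-2)/4}$ is borderline non-integrable. Your sketch has no analogue of this time-integration step. If instead your $L^{2}\to L^{6}$ endpoint is meant as a direct Carleson--Sj\"olin/H\"ormander bound for $W_{a,j}^{\star}$ itself (which does give $h^{-1/6}$), you still must produce the factor $a^{1/2}$ at $p=6$; that requires an almost-orthogonality argument in $b$ (for each $x_{1}$ only $|b-x_{1}|\lesssim a$ contributes), which is exactly what the paper's $b$-integral inside the $TT^{\star}$ kernel encodes, and your sketch does not supply it.

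Two further problems. First, in the small-$a$ regime your $p=2$ endpoint as described (``the $a^{1/2}$ contribution of the wavelet cutoff'') gives only $a^{1/2}$, and interpolating with the $L^{\infty}$ endpoint then loses the factor $a^{1/p}$ claimed in \eqref{Wsajopbnd}; to get $a^{\frac12+\frac1p}$ the $b$-localisation must be used twice --- Cauchy--Schwarz over the window $|b-x_{1}|\lesssim a$ and then the $x_{1}$-integration over the same $O(a)$ window --- which is what the paper's factor $\left(\int_{0}^{a}dt\right)^{2/p}$ accomplishes after $TT^{\star}$. Second, your claim that $W_{a,0}^{\star}$ is non-oscillatory ``regardless of $a$'' is false once $a\gg h^{1-\frac{2}{k+1}}$ (relevant for $k\geq 2$): across the window $|\xi_{2}|\lesssim h^{\frac{1}{k+1}}$ the phase then varies by $\sim ah^{\frac{2}{k+1}-1}\gg 1$, and extending the non-oscillatory bound to $a\sim 1$ is strictly weaker than \eqref{Wbajopbnd} at intermediate $p$. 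The paper treats $j=0$ exactly like $j\geq 1$: above the threshold the stationary-phase scale $h^{1/2}|x_{1}-z_{1}|^{-1/2}$ is finer than the cutoff scale $h^{\frac{1}{k+1}}$, so the smooth cutoff $\chi_{0}$ does not obstruct the stationary-phase gain, and that gain is needed.
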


\begin{proof}
Here we use a well developed technique to obtain the $L^{p}$ estimates. Indeed the general idea dates back Tomas-Stein's \cite{Tom75} treatment of $L^{2}\to L^{p}$ extension (dual restriction) results. There are three important steps.
\begin{enumerate}
\item Thinking again as $x_{1}$ as the `time' variable we aim to compute a Strichartz $L^{p}_{x_{1}}L^{p}_{x_{2}}$ estimate. To that end we compute $W_{a,j}^{\star}(x_{1})W_{a,j}(z_{1})$ and find estimates of the form
\begin{align*}
\norm{W_{a,j}^{\star}(x_{1})W_{a,j}(z_{1})}_{L^{1}\to L^{\infty}}&\lesssim h^{-\gamma_{\infty}}Q_{\infty}(|x_{1}-z_{1}|,h,a)\\
\norm{W_{a,j}^{\star}(x_{1})W_{a,j}(z_{1})}_{L^{2}\to L^{2}}&\lesssim h^{-\gamma_{2}}Q_{2}(|x_{1}-z_{1}|,h,a)\end{align*}
where the $Q_{i}(|x_{1}-z_{1}|,h,a)$ captures decay as $|x_{1}-z_{1}|$ increases.
\item As in the Keel-Tao \cite{KT98} treatment of abstract Strichartz estimates we interpolate to obtain 
$$\norm{W_{a,j}^{\star}(x_{1})W_{a,j}(z_{1})}_{L^{p'}\to L^{p}}\lesssim h^{-\gamma_{p}}Q_{p}(|x_{1}-z_{1}|,h,a).$$
\item Finally we resolve the $|x_{1}-z_{1}|$ integral using Young's inequality, or Hardy-Littlewood-Sobolev's inequality in the borderline case where $Q_{p}(|x_{1}-z_{1}|,h,a)^{p/2}$ just fails to be integrable.
\end{enumerate}

First note that for any $x_{1}$
$$\norm{W_{a,j}(x_{1})}_{L^{2}_{x_{2}}\to L^{2}_{x_{2}}}\lesssim\norm{W_{a}(x_{1})}_{L^{2}_{x_{2}}\to L^{2}_{x_{2}}}\lesssim 1.$$
Then using the support properties of $f$ and the fact that $v$ localised in an $O(1)$ region we see that
$$\norm{W_{a,j}(x_{1})}_{L^{2}_{x_2}L^{2}_{b}\to L^{2}_{x_{1}}L^{2}_{x_{2}}}\lesssim a^{1/2}.$$
So
$$\norm{W_{a,j}^{\star}(x_{1})W_{a,j}(z_{1})}_{L^{2}\to L^{2}}\lesssim a.$$
Therefore we need only focus on the $L^{1}\to L^{\infty}$ estimate. Computing $W_{a,j}^{\star}W_{a,j}$ we have that,
$$W_{a,j}^{\star}(x_{1})W_{a,j}(z_{1})g=W_{a,j}^{\star}W_{a,j}(x_{1},z_{1})g=\int K(x_{1},x_{2},z_{1},z_{2})g(z_{1},z_{2})dz_{1}dz_{2}$$
where for $1\leq{}j\leq J$
\begin{multline*}
K_{j}(x_{1},x_{2},z_{1},z_{2})=\frac{1}{2\pi h}\int e^{\frac{i}{h}\left(\phi(x_{1},x_{2},\xi_{2})-\phi(z_{1},z_{2},\xi_{2})\right)}f(a^{-1}(x_{1}-b))f(a^{-1}(z_{1}-b))\\
\times \chi^{2}(2^{-j}h^{-\frac{1}{k+1}}|\xi_{2}|)B(x_{1},x_{2},z_{1},z_{2},\xi_{2})d\xi_{2}db\end{multline*}
and
\begin{multline*}
K_{0}(x_{1},x_{2},z_{1},z_{2})=\frac{1}{2\pi h}\int e^{\frac{i}{h}\left(\phi(x_{1},x_{2},\xi_{2})-\phi(z_{1},z_{2},\xi_{2})\right)}f(a^{-1}(x_{1}-b))f(a^{-1}(z_{1}-b))\\
\times\chi^{2}_{0}(h^{-\frac{1}{k+1}}\xi_{2})B(x_{1},x_{2},z_{1},z_{2},\xi_{2})d\xi_{2}db.\end{multline*}
Note that for the kernel to be non-zero we require that $|x_{1}-z_{1}|\leq{}Ca$ for some sufficiently large constant. Also since $v$ is localised to an $O(1)$ region we must  have $|x_{1}-z_{1}|\leq 1$. We estimate the $b$ integral using the support properties of $f$. To compute the $\xi_{2}$ integral we want to appeal to the stationary phase lemma (it is here that we use the curvature assumption on $p_{1}(x,\xi)$). As it \cite{koch} and \cite{tacy09} we have that
\begin{multline*}
\phi(x_{1},x_{2},\xi_{2})-\phi(z_{1},z_{2},\xi_{2})=\langle x_{2}-z_{2},\xi_{2}+z_{1}F(z_1,x_{2},z_{2},\xi_{2})\rangle\\
+(x_{1}-z_{1})a(0,x_{2},\xi_{2})+O(|x_{1}-z_{1}|^{2}).\end{multline*}
The curvature assumption guarantees that $|\partial^{2}_{\xi_{2}\xi_{2}}a(0,x_{2},\xi_{2})|>c>0$ so
$$|\partial^{2}_{\xi_{2}\xi_{2}}(\phi(x_{1},x_{2},\xi_{2})-\phi(z_{1},z_{2},\xi_{2}))|\geq{}c|x_{1}-z_{1}|.$$
Therefore, had the symbol had been smooth in $\xi_{2}$, the stationary phase lemma would tell us that greatest contribution would come from a $h^{1/2}|x_{1}-z_{1}|^{-1/2}$ region about the critical point. However, the cut off in $\xi_{2}$ is not smooth. In fact for $|x_{1}-z_{1}|\leq 2^{-2j}h^{1-\frac{2}{k+1}}$ we get a better estimate by just using the support properties of $\chi$ (or $\chi_{0}$). When $|x_{1}-z_{1}|\geq{}2^{-2j}h^{1-\frac{2}{k+1}}$ the regularity of the cut off is equal or better than the natural regularity introduced in the proof of the stationary phase lemma. Therefore
$$|K_{j}(x_{1},x_{2},z_{1},z_{2})|\leq{}
\begin{cases}
a2^{j}h^{-1+\frac{1}{k+1}}&|x_{1}-z_{1}|\leq{}2^{-2j}h^{1-\frac{2}{k+1}}\\
ah^{-\frac{1}{2}}|x_{1}-z_{1}|^{-1/2}&|x_{1}-z_{1}|\geq{}2^{-2j}h^{1-\frac{2}{k+1}}\end{cases}$$
and
$$\norm{W_{a,j}^{\star}W_{a,j}(x_{1},z_{1})}_{L^{1}\to L^{\infty}}\lesssim \begin{cases}
a2^{j}h^{-1+\frac{1}{k+1}}&|x_{1}-z_{1}|\leq{}2^{-2j}h^{1-\frac{2}{k+1}}\\
ah^{-\frac{1}{2}}|x_{1}-z_{1}|^{-1/2}&|x_{1}-z_{1}|\geq{}2^{-2j}h^{1-\frac{2}{k+1}}.\end{cases}$$
Since we have the $L^{2}\to L^{2}$ bound of
$$\norm{W_{a,j}^{\star}W_{a,j}(x_{1},z_{1})}_{L^{2}\to L^{2}}\lesssim a$$
we can interpolate to get
$$\norm{W_{a,j}^{\star}W_{a,j}(x_{1},z_{1})}_{L^{p'}\to L^{p}}\lesssim\begin{cases}
a2^{2j\left(\frac{1}{2}-\frac{1}{p}\right)}h^{2\left(-1+\frac{1}{k+1}\right)\left(\frac{1}{2}-\frac{1}{p}\right)}&|x_{1}-z_{1}|\leq 2^{-2j}h^{1-\frac{2}{k+1}}\\
ah^{-\left(\frac{1}{2}-\frac{1}{p}\right)}|x_{1}-z_{1}|^{-\left(\frac{1}{2}-\frac{1}{p}\right)}&|x_{1}-z_{2}|\geq 2^{-2j} h^{1-\frac{2}{k+1}}.\end{cases}$$
We are now in a position to examine our two cases
\begin{itemize}
\item[Case 1] $a\leq 2^{-2j}h^{1-\frac{2}{k+1}}$
\item[Case 2] $a \geq{}2^{-2j}h^{1-\frac{2}{k+1}}.$
\end{itemize}

In Case 1 the restriction that $|x_{1}-z_{1}|\leq Ca$ means that we are always in the situation where
$$\norm{W_{a,j}^{\star}W_{a,j}(x_{1},z_{1})}_{L^{p'}\to L^{p}}\lesssim a2^{2j\left(\frac{1}{2}-\frac{1}{p}\right)}h^{2\left(-1+\frac{1}{k+1}\right)\left(\frac{1}{2}-\frac{1}{p}\right)}.$$
So
\begin{align*}
\norm{W_{a,j}^{\star}W_{a,j}(\cdot,\cdot)}_{L^{p'}\to L^{p}}&\lesssim a2^{2j\left(\frac{1}{2}-\frac{1}{p}\right)}h^{2\left(-1+\frac{1}{k+1}\right)\left(\frac{1}{2}-\frac{1}{p}\right)}\left(\int_{0}^{a}dt\right)^{\frac{2}{p}}\\
&\lesssim a^{1+\frac{2}{p}}2^{2j\left(\frac{1}{2}-\frac{1}{p}\right)}h^{2\left(-1+\frac{1}{k+1}\right)\left(\frac{1}{2}-\frac{1}{p}\right)}.\end{align*}
In Case 2 (for $p\neq 6$) Young's inequality gives
\begin{multline*}
\norm{W_{a,j}^{\star}W_{a,j}(\cdot,\cdot)}_{L^{p'}\to L^{p}}\lesssim a2^{2j\left(\frac{1}{2}-\frac{1}{p}\right)}h^{2\left(-1+\frac{1}{k+1}\right)\left(\frac{1}{2}-\frac{1}{p}\right)}\left(\int_{0}^{2^{-2j}h^{1-\frac{2}{k+1}}}dt\right)^{\frac{2}{p}}\\
+ah^{-\left(\frac{1}{2}-\frac{1}{p}\right)}\left(\int_{2^{-2j}h^{1-\frac{2}{k+1}}}^{a}|t|^{-\frac{p}{2}\left(\frac{1}{2}-\frac{1}{p}\right)}dt\right)^{\frac{2}{p}}\end{multline*}
and so
$$\norm{W_{a,j}^{\star}W_{a,j}(\cdot,\cdot)}_{L^{p'}\to L^{p}}\lesssim 2^{2\mu(p,j)}h^{-2\delta(p,j)}.$$
Using Hardy-Littlewood-Sobolev's inequality to resolve the the case where $p=6$ we obtain
$$\norm{W_{a,j}^{\star}W_{a,j}(\cdot,\cdot)}_{L^{6/5}\to L^{6}}\lesssim 2^{2\mu(6,j)}h^{-2\delta(6,j)}.$$

\end{proof}

Finally we can put this information together with the estimates from Proposition \ref{prop:Xvest} and Proposition \ref{prop:largea} (whose proofs we defer to the end of this section). 

\begin{thm}\label{thm:Wint}
If $W(x_{1})$ and $v$ are given by \eqref{Wdef} and \eqref{vdef} respectively,
\begin{equation}\norm{W^{\star}(\cdot)v}_{L^{p}}\lesssim h^{-\delta(p,k)}\norm{u}_{L^{2}}.\label{fullest}\end{equation}
\end{thm}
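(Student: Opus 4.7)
The plan is to combine the operator bounds of Theorem \ref{thm:Waest} with the $L^{2}$ bounds on the wavelet/dyadic coefficients supplied by Propositions \ref{prop:Xvest} and \ref{prop:largea}. Starting from
$$W^{\star}(x_{1})v=\frac{1}{C_{f}}\int \frac{1}{|a|^{5/2}}\sum_{j=0}^{J}W_{a,j}^{\star}(x_{1})\FT[X_{v}^{j}]\,da,$$
I would apply Minkowski's inequality on the $a$-integral and the triangle inequality on the $j$-sum to reduce the $L^{p}$ norm to a scalar sum/integral of the form
$$\sum_{j=0}^{J}\int \frac{1}{|a|^{5/2}}\norm{W_{a,j}^{\star}\FT[X_{v}^{j}]}_{L^{p}_{x_{1},x_{2}}}\,da,$$
and then invoke Theorem \ref{thm:Waest} to estimate each integrand in terms of $\norm{\FT[X_{v}^{j}](a,\cdot,\cdot)}_{L^{2}}$.

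For each fixed $j$ the $a$-integral splits naturally at the threshold $a^{\ast}_{j}=2^{-2j}h^{1-2/(k+1)}$ that separates the two regimes of Theorem \ref{thm:Waest}. On $a\leq a^{\ast}_{j}$, estimate \eqref{Wsajopbnd} applies; the gain $a^{1/2+1/p}$ in $a$ together with the weight $|a|^{-5/2}$ integrates up to the threshold, where the output agrees, after routine bookkeeping of exponents, with the boundary value of the bound on the large-$a$ regime. On $a\geq a^{\ast}_{j}$, estimate \eqref{Wbajopbnd} gives a factor $a^{1/2}2^{\mu(p,j)}h^{-\delta(p,k)}\norm{\FT[X_{v}^{j}](a,\cdot,\cdot)}_{L^{2}}$; Propositions \ref{prop:Xvest} and \ref{prop:largea} then say that the weighted $a$-integral $\int|a|^{-5/2}a^{1/2}\norm{\FT[X_{v}^{j}](a,\cdot,\cdot)}_{L^{2}}\,da$ is controlled by its $|a|\sim 1$ contribution, since the $hD_{x_{1}}$ quasimode property forces $\FT[X_{v}^{j}]$ to be concentrated there. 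The $a$-integration is then routine, using Young's inequality (and Hardy--Littlewood--Sobolev at the endpoint $p=6$).

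What remains is to sum the resulting quantity over $j\in\{0,\ldots,J\}$ with $2^{J}h^{1/(k+1)}=1$. For $2\leq p\leq 6$ one has $\mu(p,j)=0$, so the $j$-sum converges against any positive decay of $\norm{\FT[X_{v}^{j}]}_{L^{2}}$. The delicate point is the high-$p$ regime $p>6$, where $\mu(p,j)=j(1/2-3/p)$ is strictly positive and growing: here one must use the $h^{k+1}D^{k+1}_{x_{2}}$ quasimode property sharply at each dyadic scale $|\xi_{2}|\sim 2^{j}h^{1/(k+1)}$ to obtain a decay of $\norm{\FT[X_{v}^{j}]}_{L^{2}}$ that dominates the growth of $2^{\mu(p,j)}$. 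Since $k+1>1/2>1/2-3/p$, the expected bound $\norm{\FT[X_{v}^{j}]}_{L^{2}}\lesssim 2^{-j(k+1)}$ suffices and the series is dominated by its $j=0$ term. This corresponds precisely to the $|\xi_{2}|\sim h^{1/(k+1)}$ concentration of the extremal quasimode $T^{h}_{\alpha_{k}}$ from \eqref{highp}, and yields exactly the exponent $-\delta(p,k)$, establishing \eqref{fullest}.
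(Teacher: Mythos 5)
Your proposal is correct and follows essentially the same route as the paper: decompose in the dyadic index $j$ and the wavelet scale $a$, split the $a$-integral at the threshold $2^{-2j}h^{1-\frac{2}{k+1}}$, apply the two regimes of Theorem \ref{thm:Waest}, control $\norm{\FT[X_{v}^{j}]}_{L^{2}}$ via Propositions \ref{prop:Xvest} and \ref{prop:largea} (whose $2^{-j(k+1)M}$ decay, already with $M=1$, beats $2^{\mu(p,j)}$), and sum. The only slip is cosmetic: Young's inequality and Hardy--Littlewood--Sobolev belong to the proof of Theorem \ref{thm:Waest} itself, not to the remaining $a$-integration here, which is just an elementary power-of-$a$ integral.
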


\begin{proof}
Clearly
$$\norm{W^{\star}v}_{L^{p}}\lesssim \sum_{j=0}^{J}\norm{W^{\star}_{j}v}_{L^{p}}$$
where
$$W^{\star}_{j}v=\frac{1}{C_f}\int \frac{1}{|a|^{5/2}}W_{a,j}^{\star}\left[\FT[X_{v}](a,b,\cdot)\right]da.$$
We divide the $a$ integral into three parts
$$W^{\star}_{j}v=T_{j,1}[\FT[X_{v}]]+T_{j,2}[\FT[X_{v}]]+T_{j,3}[\FT[X_{v}]]$$
with
\begin{align}
T_{j,1}[\FT[X_{v}]]&=\frac{1}{C_{f}}\int_{|a|\leq 2^{-2j}h^{1-\frac{2}{k+1}}} \frac{1}{|a|^{5/2}}W_{a,j}\left[\FT[X_{v}](a,\cdot,\cdot)\right]da,\label{T1def}\\
T_{j,2}[\FT[X_{v}]]&=\frac{1}{C_{f}}\int_{2^{-2j}h^{1-\frac{2}{k+1}}\leq{}|a|\leq 1}\frac{1}{|a|^{5/2}}W_{a,j}\left[\FT[X_{v}](a,\cdot,\cdot)\right]da,\label{T2def}\\
&\text{and}\nonumber\\
T_{j,3}[\FT[X_{v}]]&=\frac{1}{C_{f}}\int_{|a|\geq{}1}\frac{1}{|a|^{5/2}}W_{a,j}\left[\FT[X_{v}](a,\cdot,\cdot)\right]da.\label{T3def}\end{align}
Note that the localisation  applied to $W_{a,j}^{\star}$ means that it is enough to consider $T_{j,i}$ acting on $\FT[X_{v}^{j}]$ given by
$$\begin{cases}\FT[X_{v}^{j}](a,b,\xi_{2})=\chi\left(2^{-j}h^{-\frac{1}{k+1}}|\xi_{2}|\right)\FT[X_{v}](a,b,\xi_{2})&j\geq 1\\
\FT[X_{v}^{0}](a,b,\xi_{2})=\chi_{0}\left(h^{-\frac{1}{k+1}}\xi_{2}\right)\FT[X_{v}](a,b,\xi_{2})&j=0.\end{cases}$$
Let's treat $T_{j,1}$ first. Notice that since $k\geq{}1$, $2^{-2j}h^{1-\frac{2}{k+1}}\leq{}1$ so we can use the result of Proposition \ref{prop:Xvest}, namely for $|a|\leq{}1$,
$$\norm{\FT[X_{v}^{j}](a,\cdot,\cdot)}_{L^{2}_{\xi_{2}}L^{2}_{b}}\lesssim a^{\frac{3}{2}}2^{-jM}$$ 
for any natural number $M$. 
Then
\begin{align*}
\norm{T_{j,1}[\FT[X^{j}_{v}]]}_{L^{p}}&\lesssim \int_{|a|\leq{}2^{-2j}h^{1-\frac{2}{k+1}}}|a|^{-2+\frac{1}{p}}h^{\left(-1+\frac{1}{k+1}\right)\left(\frac{1}{2}-\frac{1}{p}\right)}2^{j\left(\frac{1}{2}-\frac{1}{p}\right)}\norm{\FT[X^{j}_{v}](a,\cdot,\cdot)}_{L^{2}_{\xi_{2}}L^{2}_{b}}\\
&\lesssim \int_{|a|\leq 2^{-2j}h^{1-\frac{2}{k+1}}}|a|^{-\frac{1}{2}+\frac{1}{p}}h^{\left(-1+\frac{1}{k+1}\right)\left(\frac{1}{2}-\frac{1}{p}\right)}2^{j\left(\frac{1}{2}-\frac{1}{p}-M\right)}\norm{u}_{L^{2}}\\
&\lesssim h^{-\frac{1}{2(k+1)}+\frac{1}{p}\left(2-\frac{3}{k+1}\right)}2^{j\left(-\frac{1}{2}-\frac{2}{p}-M\right)}\norm{u}_{L^{2}}\end{align*}
which is better than \eqref{fullest}. Now considering $T_{j,2}$ we have
\begin{align*}
\norm{T_{j,2}[\FT[X^{j}_{v}]]}_{L^{p}}&\lesssim \int_{2^{-2j}h^{1-\frac{2}{k+1}}\leq|a|\leq 1}a^{-2} 2^{\mu(p,j)}h^{-\delta(p,k)}\norm{\FT[X^{j}_{v}](a,\cdot,\cdot)}_{L^{2}_{\xi_{2}}L^{2}_{b}}da\\
&\lesssim \int_{2^{-2j}h^{1-\frac{2}{k+1}}\leq|a|\leq 1}a^{-\frac{1}{2}} 2^{\mu(p,j)-jM}h^{-\delta(p,k)}\norm{u}_{L^{2}}\\
&\lesssim 2^{\mu(p,j)-Mj}h^{-\delta(p,k)}\norm{u}_{L^{2}}.\end{align*}
Finally to deal with $T_{j,3}$ we use the result of Proposition \ref{prop:largea} which states that for $|a|\geq{}1$ 
$$\norm{\FT[X_{v}^{j}](a,\cdot,\cdot)}_{L^{2}_{\xi_{2}}L^{2}_{b}}\lesssim 2^{-jM}$$ 
for any natural number $M$. This gives us
\begin{align*}
\norm{T_{j,3}[\FT[X^{j}_{v}]]}_{L^{p}}&\lesssim\int_{1\leq{}|a|}a^{-2}2^{\mu(p,j)}h^{-\delta(p,k)}\norm{\FT[X^{j}_{v}](a,\cdot,\cdot)}_{L^{2}_{\xi_{2}}L^{2}_{b}}\\
&\lesssim \int_{1\leq|a|}a^{-2}2^{\mu(p,j)-jM}h^{-\delta(p,k)}\norm{u}_{L^{2}}\\
&\lesssim 2^{\mu(p,j)-jM}h^{-\delta(p,k)}\norm{u}_{L^{2}}.\end{align*}
Therefore by making $M$ large enough
$$\norm{W^{\star}_{j}(\cdot)v}_{L^{p}}\lesssim h^{-\delta(p,k)}2^{-2j}\norm{u}_{L^{2}}$$
and so summing
$$\norm{W^{\star}(\cdot)v}_{L^{p}}\lesssim h^{-\delta(p,k)}\norm{u}_{L^{2}}.$$

\end{proof}

Below we provide the proofs of Propositions \ref{prop:Xvest} and \ref{prop:largea}.

\begin{prop}\label{prop:Xvest}
Suppose $v$ is a  strong  joint quasimode of order $h\norm{u}_{L^{2}}$ of $hD_{x_{1}}$ and $h^{k+1}D^{k+1}_{x_{2}}$ and  $\FT\left[X_{v}^{j}\right](a,b,\xi_{2})$
is given by
$$\FT\left[X_{v}^{j}\right](a,b,\xi_{2})=\chi\left(2^{-j}h^{-\frac{1}{k+1}}\xi_{2}\right)\FT\left[X_{v}(a,b,\cdot)\right](\xi_{2})\quad 1\leq j\leq J,$$
$$\FT\left[X_{v}^{0}\right](a,b,\xi_{2})=\chi_{0}\left(h^{-\frac{1}{k+1}}\xi_{2}\right)\FT\left[X_{v}(a,b,\cdot)\right](\xi_{2}).$$
 Then for all $|a|\leq{}1$
\begin{equation}\norm{\FT\left[X_{v}^{j}\right](a,\cdot,\cdot)}_{L^{2}_{b}L^{2}_{\xi_{2}}}\lesssim 2^{-jM}a^{\frac{3}{2}}\norm{u}_{L^{2}}\label{Xvest}\end{equation}
for any  $M\in\N$.
\end{prop}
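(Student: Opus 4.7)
The plan is to combine the vanishing-moment property of the wavelet $f$ with both quasimode hypotheses on $v$, working on the semiclassical Fourier side. By writing $X_v(a,b,x_2)$ as a convolution in $y_1$ against the rescaled wavelet and applying Plancherel in $b$, I would first derive the identity
\begin{equation*}
\norm{X_v(a,\cdot,\cdot)}_{L^2_b L^2_{x_2}}^2 \;=\; |a|\int |\tilde f(-a\xi_1/h)|^2\, \norm{\FT[v](\xi_1,\cdot)}_{L^2_{x_2}}^2\,d\xi_1,
\end{equation*}
where $\tilde f$ denotes the classical Fourier transform of $f$. Since $\int f = 0$, we have $\tilde f(0)=0$, and as $f$ is Schwartz, $|\tilde f(\eta)|\le C_L|\eta|(1+|\eta|)^{-L}$ for every $L$.

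To extract the $|a|^{3/2}$ bound, I would split the $\xi_1$-integral at $|\xi_1|=h/|a|$. On $\{|\xi_1|\le h/|a|\}$ the estimate $|\tilde f(-a\xi_1/h)|^2\le C(a\xi_1/h)^2$, combined with the quasimode bound $\norm{hD_{x_1}v}_{L^2}\lesssim h\norm{u}_{L^2}$, gives a contribution of order $(a/h)^2\cdot h^2\norm{u}_{L^2}^2 = a^2\norm{u}_{L^2}^2$. On $\{|\xi_1|\ge h/|a|\}$, Chebyshev with the iterated strong-quasimode bound $\norm{(hD_{x_1})^N v}_{L^2}\lesssim h^N\norm{u}_{L^2}$ (any $N\ge 1$) produces a contribution $\lesssim (|a|/h)^{2N}h^{2N}\norm{u}_{L^2}^2 = |a|^{2N}\norm{u}_{L^2}^2\lesssim a^2\norm{u}_{L^2}^2$, where the last inequality uses $|a|\le 1$. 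Including the prefactor $|a|$ yields $\norm{X_v}_{L^2_b L^2_{x_2}}^2\lesssim |a|^3\norm{u}_{L^2}^2$, which already establishes \eqref{Xvest} for $j=0$.

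For $j\ge 1$, I would exploit the support condition $|\xi_2|\sim 2^j h^{\frac{1}{k+1}}$ of the cutoff $\chi(2^{-j}h^{-\frac{1}{k+1}}\xi_2)$ to insert and divide by $\xi_2^{(k+1)M}$. On the support this pointwise factor is bounded by $(2^j h^{\frac{1}{k+1}})^{-(k+1)M}=2^{-j(k+1)M}h^{-M}$, and via Plancherel in $x_2$ together with the identity $\FT[(h^{k+1}D^{k+1}_{x_2})^M w]=\xi_2^{(k+1)M}\FT[w]$, the remaining factor $\xi_2^{(k+1)M}\FT[X_v]$ becomes $\FT[X_{v'}]$ for $v'=(h^{k+1}D^{k+1}_{x_2})^M v$. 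Because $hD_{x_1}$ and $h^{k+1}D^{k+1}_{x_2}$ are constant-coefficient and hence commute exactly, Definition \ref{defn:strongqm} gives $\norm{(hD_{x_1})^N v'}_{L^2}\lesssim h^{N+M}\norm{u}_{L^2}$, so the argument of the previous paragraph applied verbatim to $v'$ yields $\norm{X_{v'}}_{L^2_b L^2_{x_2}}\lesssim |a|^{3/2}h^M\norm{u}_{L^2}$. Assembling the factors,
\begin{equation*}
\norm{\FT[X_v^j]}_{L^2_b L^2_{\xi_2}}\lesssim 2^{-j(k+1)M}h^{-M}\cdot |a|^{3/2}h^M\norm{u}_{L^2} = 2^{-j(k+1)M}|a|^{3/2}\norm{u}_{L^2},
\end{equation*}
and since $M$ is arbitrary this delivers \eqref{Xvest}. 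The principal point to check is that the wavelet argument of the first paragraph can indeed be reapplied to $v'$, which is precisely where the \emph{joint} part of the strong quasimode hypothesis (rather than the individual bounds on $p_1$ and $p_2$) is essential; once that is in hand the proof reduces to two clean Plancherel plus Chebyshev steps.
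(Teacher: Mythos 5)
Your proposal is correct, and the key hypotheses you invoke are exactly the ones available: the $j\geq 1$ reduction (inserting $\xi_{2}^{-(k+1)M}\xi_{2}^{(k+1)M}$ on the support of the cutoff, converting $\xi_{2}^{(k+1)M}\FT[X_{v}]$ into $\FT[(h^{k+1}D_{x_{2}}^{k+1})^{M}X_{v}]$ and transferring the derivatives onto $v$) is precisely what the paper does, and your final factor $2^{-j(k+1)M}$ is, as in the paper, stronger than the stated $2^{-jM}$. Where you genuinely diverge is in how the factor $a^{3/2}\norm{u}_{L^{2}}$ is extracted from the wavelet pairing. The paper works in physical space: since $\int f=0$ one writes $f=iD_{x_{1}}g$ with $g$ compactly supported, integrates by parts to put $D_{x_{1}}$ on $v$ (gaining a factor $a$), applies Cauchy--Schwarz over the ball $B_{a}(b)$, and then uses the finite overlap of these balls in the $b$-integration to gain the remaining $a^{1/2}$, ending with $a^{3/2}\norm{D_{x_{1}}v}_{L^{2}}\lesssim a^{3/2}\norm{u}_{L^{2}}$. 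You instead work on the frequency side: the exact Plancherel-in-$b$ identity expressing $\norm{X_{v}(a,\cdot,\cdot)}_{L^{2}_{b}L^{2}_{x_{2}}}^{2}$ as $|a|\int|\tilde f(-a\xi_{1}/h)|^{2}\norm{\FT[v](\xi_{1},\cdot)}^{2}_{L^{2}_{x_{2}}}d\xi_{1}$ is correct (up to harmless normalising constants), and the split at $|\xi_{1}|=h/|a|$, using $\tilde f(0)=0$ on the low frequencies and Chebyshev with $\norm{(hD_{x_{1}})^{N}v'}_{L^{2}}\lesssim h^{N+M}\norm{u}_{L^{2}}$ on the high frequencies, delivers the same $|a|^{3}$ bound; note that $N=1$ already suffices, so in the end you consume no more of the strong joint quasimode hypothesis than the paper does ($M_{1}=1$, $M_{2}=M$). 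Your route buys an exact identity and avoids the support-overlap counting in $b$; the paper's route stays in physical space and avoids the Fourier-side bookkeeping. Two cosmetic points: $f$ is smooth and compactly supported (hence $\tilde f$ rapidly decreasing), not literally assumed Schwartz, and only boundedness of $\tilde f$ is actually used on the high-frequency piece; and the commutation remark about $hD_{x_{1}}$ and $h^{k+1}D_{x_{2}}^{k+1}$ is not really needed, since the joint condition \eqref{jqmdef} is already stated in the order $p_{1}^{M_{1}}p_{2}^{M_{2}}$ that you use.
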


\begin{proof}
The semiclassical Fourier transform preserves $L^{2}$ norms so for fixed $a$ and $b$
$$\norm{\FT\left[X_{v}^{0}\right](a,b,\cdot)}_{L^{2}_{\xi_{2}}}\lesssim\norm{\FT\left[X_{v}\right](a,b,\cdot)}_{L^{2}_{\xi_{2}}}=\norm{X_{v}(a,b,\cdot)}_{L^{2}_{x_{2}}}.$$
Now
$$X_{v}(a,b,x_{2})=\frac{1}{|a|^{1/2}}\langle f(a^{-1}(x_{1}-b),v(x_{1},x_{2})\rangle_{x_{1}}.$$
Since $f$ has integral zero it can be written as the derivative of a function $g$ which is also compactly supported. That is
$$f(x_{1})=iD_{x_{1}}g(x_{1})\Rightarrow f(a^{-1}(x_{1}-b))=aiD_{x_{1}}g(a^{-1}(x_{1}-b)).$$
Therefore
\begin{align*}
X_{v}(a,b,x_{2})&=\frac{ia}{|a|^{1/2}}\langle D_{x_{1}}g(a^{-1}(x_{1}-b)),v(x_{1},x_{2})\rangle_{x_{1}}\\
&=-\frac{ia}{|a|^{1/2}}\langle g(a^{-1}(x_{1}-b),D_{x_{1}}v(x_{1},x_{2})\rangle_{x_{1}}.\end{align*}
So
$$\left|X_{v}(a,b,x_{2})\right|\lesssim a\norm{g}_{L^{2}}\norm{D_{x_{1}}v(x_{1},x_{2})}_{L^{2}_{x_{1}}(B_{a}(b))}$$
and
$$\norm{X_{v}(a,b,\cdot)}_{L^{2}_{x_{2}}}\lesssim a\norm{D_{x_{1}}v(x_{1},x_{2})}_{L^{2}_{x_{1}}(B_{a}(b))L^{2}_{x_{2}}}.$$
Therefore
$$\norm{\FT\left[X_{v}^{0}\right](a,b,\cdot)}_{L^{2}_{\xi_{2}}}\lesssim a\norm{D_{x_{1}}v(x_{1},x_{2})}_{L^{2}_{x_{1}}(B_{a}(b))L^{2}_{x_{2}}}.$$
Computing the $b$ integral we have
\begin{align*}
\norm{\FT\left[X_{v}^{0}\right](a,\cdot,\cdot)}_{L^{2}_{b}L^{2}_{\xi_{2}}}&\lesssim a^{\frac{3}{2}}\norm{D_{x_{1}}v}_{L^{2}}\\
&\lesssim a^{\frac{3}{2}}\norm{u}_{L^{2}}.\end{align*}
Where in the final step we have used the fact that $v$ is an $O_{L^{2}}(h\norm{u}_{L^{2}})$ quasimode of $hD_{x_{1}}$.

When $j\geq{}1$, we first write
$$\FT\left[X_{v}^{j}(a,b,\cdot)\right](\xi_{2})=\frac{1}{\xi_{2}^{(k+1)M}}\xi_{2}^{(k+1)M}\FT\left[X_{v}^{j}(a,b,\cdot)\right](\xi_{2})$$
and see that
\begin{align*}
\norm{\FT\left[X_{v}^{j}\right](a,b,\cdot)}_{L^{2}_{\xi_{2}}}&\lesssim h^{-M}2^{-j(k+1)M}\norm{\xi_{2}^{(k+1)M}\FT\left[X_{v}\right](a,b,\cdot)}_{L^{2}_{\xi_{2}}}\\
&=h^{-M}2^{-j(k+1)M}\norm{(h^{k+1}D_{x_{2}}^{k+1})^{M}X_{v}(a,b,\cdot)}_{L^{2}_{x_{2}}}.\end{align*}
Now
$$(h^{k+1}D^{k+1}_{x_{2}})^{M}X_{v}^{j}(a,b,x_{2})=(h^{k+1}D^{k+1}_{x_{2}})^{M}\frac{1}{|a|^{1/2}}\langle f(a^{-1}(x_{1}-b),v(x_{1},x_{2})\rangle_{x_{1}}.$$
Following the same manipulations as the $j=0$ case
\begin{align*}
(h^{k+1}D^{k+1}_{x_{2}})^{M}X_{v}^{j}(a,b,x_{2})&=-\frac{ia}{|a|^{1/2}}\langle g(a^{-1}(x_{1}-b),(h^{k+1}D^{k+1}_{x_{2}})^{M}D_{x_{1}}v(x_{1},x_{2})\rangle_{x_{1}},\\
\left|(h^{k+1}D^{k+1}_{x_{2}})^{M}X_{v}^{j}(a,b,x_{2})\right|&\lesssim a \norm{(h^{k+1}D^{k+1}_{x_{2}})^{M}D_{x_{1}}v(x_{1},x_{2})}_{L^{2}_{x_{1}}(B_{a}(b))}\end{align*}
and
$$\norm{(h^{k+1}D^{k+1}_{x_{2}})^{M}X_{v}^{j}(a,b,\cdot)}_{L^{2}_{x_{2}}}\lesssim a \norm{(h^{k+1}D^{k+1}_{x_{2}})^{M}D_{x_{1}}v}_{L^{2}_{x_{1}}(B_{a}(b))L^{2}_{x_{2}}}.$$
Therefore
$$\norm{\xi_{2}^{(k+1)M}\FT\left[X_{v}^{j}\right](a,b,\cdot)}_{L^{2}_{\xi_{2}}}\lesssim a\norm{(h^{k+1}D^{k+1}_{x_{2}})^{M}D_{x_{1}}v}_{L^{2}_{x_{1}}(B_{a}(b))L^{2}_{x_{2}}}.$$
and so
$$\norm{\FT\left[X_{v}^{j}\right](a,b,\cdot)}_{L^{2}_{\xi_{2}}}\lesssim ah^{-M}2^{j(k+1)M}\norm{(h^{k+1}D^{k+1}_{x_{2}})^{M}D_{x_{1}}v}_{L^{2}_{x_{1}}(B_{a}(b))L^{2}_{x_{2}}}.$$
Integrating in $b$ gives us
$$\norm{\FT\left[X_{v}^{j}\right](a,\cdot,\cdot)}_{L^{2}_{\xi_{2}}L^{2}_{b}}\lesssim a^{\frac{3}{2}}\norm{(h^{k+1}D^{k+1}_{x_{2}})^{M}D_{x_{1}}v}_{L^{2}}.$$
Finally since $v$ is a strong joint quasimode of order $h$ of $hD_{x_{1}}$ and $h^{k+1}D_{x_{2}}^{k+1}$ we obtain
$$\norm{\FT\left[X_{v}^{j}\right](a,\cdot,\cdot)}_{L^{2}_{\xi_{2}}L^{2}_{b}}\lesssim a^{\frac{3}{2}}2^{-j(k+1)M}\norm{u}_{L^{2}}.$$ 

\end{proof}

\begin{prop}\label{prop:largea}
Suppose $v$ is a strong joint quasimode of order $h$ of $hD_{x_{1}}$ and $h^{k+1}D^{k+1}_{x_{2}}$ and $\FT\left[X_{v}^{j}\right]$ is as in Proposition \ref{prop:Xvest}. Then for $|a|\geq{}1$,
$$\norm{\FT[X_{v}](a,\cdot,\cdot)}_{L^{2}_{\xi_{2}}L^{2}_{b}}\lesssim 2^{-jM}\norm{u}_{L^{2}}$$
for any  $M\in\N$.
\end{prop}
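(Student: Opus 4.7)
The plan is to follow the structure of the proof of Proposition \ref{prop:Xvest}, but to replace the $x_1$-integration-by-parts argument (which produces a factor of $a$, useful only for $|a|\leq 1$) by an $a$-independent bound obtained from Young's convolution inequality, exploiting the compact $x_1$-support of $v$ forced by the semiclassical localisation of $u$. Commuting the semiclassical Fourier transform in $x_2$ past the wavelet integral one obtains
\[
  \FT[X_v^j](a,b,\xi_2) = \frac{1}{|a|^{1/2}} \int f\!\left( \frac{y_1 - b}{a}\right) w^j(y_1, \xi_2) \, dy_1,
\]
where $w^j(y_1,\xi_2) = \chi\!\bigl(2^{-j} h^{-1/(k+1)}\xi_2\bigr) \FT[v(y_1,\cdot)](\xi_2)$ (with $\chi_0$ in place of $\chi$ when $j=0$). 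Thus $\FT[X_v^j]$ is the continuous wavelet transform in $y_1$ of $w^j$, with $\xi_2$ playing the role of a passive parameter.

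For each fixed $\xi_2$ the $b$-dependence of $\FT[X_v^j](a,\cdot,\xi_2)$ is a convolution of $w^j(\cdot,\xi_2)$ against the profile $\tilde F_a(s)=|a|^{-1/2} f(-s/a)$, which satisfies $\|\tilde F_a\|_{L^2} = \|f\|_{L^2}$ uniformly in $a$. The form $\|F*g\|_{L^2}\leq\|F\|_{L^2}\|g\|_{L^1}$ of Young's inequality therefore gives
\[
  \|\FT[X_v^j](a,\cdot,\xi_2)\|_{L^2_b} \lesssim \|f\|_{L^2}\, \|w^j(\cdot,\xi_2)\|_{L^1_{y_1}},
\]
independently of $a$. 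Since $u$ (and hence $v$) is semiclassically localised, for each $\xi_2$ the function $y_1\mapsto w^j(y_1,\xi_2)$ is supported in a fixed $h$-independent compact set; Cauchy--Schwarz converts the $L^1_{y_1}$-norm into the $L^2_{y_1}$-norm at the cost of a bounded constant, and integration in $\xi_2$ yields
\[
  \|\FT[X_v^j]\|_{L^2_b L^2_{\xi_2}} \lesssim \|w^j\|_{L^2_{y_1} L^2_{\xi_2}}
\]
with an implicit constant independent of $a$.

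It remains to bound $\|w^j\|_{L^2}$, which I handle exactly as in Proposition \ref{prop:Xvest}: on the support of $\chi\bigl(2^{-j}h^{-1/(k+1)}\xi_2\bigr)$ one has $|\xi_2|\sim 2^j h^{1/(k+1)}$, so multiplying and dividing by $\xi_2^{(k+1)M}$, applying Plancherel in $x_2$, and invoking the strong joint quasimode bound $\|(h^{k+1}D_{x_2}^{k+1})^M v\|_{L^2}\lesssim h^M\|u\|_{L^2}$ gives
\[
  \|w^j\|_{L^2_{y_1}L^2_{\xi_2}} \lesssim 2^{-j(k+1)M}h^{-M}\|(h^{k+1}D_{x_2}^{k+1})^M v\|_{L^2} \lesssim 2^{-j(k+1)M}\|u\|_{L^2}.
\]
Since $M\in\N$ is arbitrary, this yields the claim for $j\geq 1$; for $j=0$ the chain degenerates to $\|w^0\|_{L^2}\leq\|v\|_{L^2}\lesssim \|u\|_{L^2}$. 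The main technical obstacle is removing the $a$-dependence from the estimate for $\|\FT[X_v^j]\|_{L^2_b L^2_{\xi_2}}$: a direct pointwise Cauchy--Schwarz in $y_1$ loses an unwanted factor of $|a|^{1/2}$ upon integrating in $b$ (the $b$-support of $X_{w^j}$ grows like $|a|$), and only the specific form of Young's inequality used above, combined with the compact $x_1$-support of $v$, avoids that loss.
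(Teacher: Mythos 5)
Your proof is correct, and it reaches the stated bound by a slightly different mechanism than the paper for the key uniformity-in-$a$ step. You view $\FT[X_{v}^{j}](a,\cdot,\xi_{2})$ as a convolution in $b$ of $w^{j}(\cdot,\xi_{2})$ with the rescaled profile $|a|^{-1/2}f(-\cdot/a)$, whose $L^{2}$ norm is independent of $a$, and apply Young's $L^{2}\ast L^{1}\to L^{2}$ inequality together with Cauchy--Schwarz over the $O(1)$ $x_{1}$-support of $v$; the paper instead bounds $X_{v}(a,b,x_{2})$ pointwise by $|a|^{-1/2}\norm{v(\cdot,x_{2})}_{L^{2}_{x_{1}}}$ (Cauchy--Schwarz restricted to the $O(1)$ support of $v$, so the $f$-factor costs only $O(1)$ rather than $|a|^{1/2}\norm{f}_{L^{2}}$) and then recovers the factor $|a|^{1/2}$ from the measure of the set $\{|b|\lesssim|a|\}$ on which $X_{v}$ is nonzero. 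Both routes use the same localisation input and the identical treatment of the frequency cut-off for $j\geq 1$ (multiply and divide by $\xi_{2}^{(k+1)M}$, Plancherel in $x_{2}$, and the strong joint quasimode bound for $(h^{k+1}D_{x_{2}}^{k+1})^{M}v$), so the substance is the same; your version has the small advantage of being uniform in $a$ rather than restricted to $|a|\geq 1$, while the paper's bookkeeping is more elementary. One remark: your closing claim that a pointwise Cauchy--Schwarz in $y_{1}$ \emph{necessarily} loses $|a|^{1/2}$ and that only Young's inequality avoids this is overstated --- the paper's accounting (keep the $|a|^{-1/2}$ from the wavelet normalisation, pay $O(1)$ for $f$ on the compact support of $v$, pay $|a|^{1/2}$ in $b$) avoids the loss without Young --- but this affects only your commentary, not the validity of your argument.
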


\begin{proof}
For $j=0$ we again use the fact that the semiclassical Fourier transform preserves $L^{2}$ norms to write,
$$\norm{\FT\left[X_{v}^{0}\right](a,b,\cdot)}_{L^{2}_{\xi_{2}}}\lesssim\norm{\FT\left[X_{v}\right](a,b,\cdot)}_{L^{2}_{\xi_{2}}}=\norm{X_{v}(a,b,\cdot)}_{L^{2}_{x_{2}}}.$$
Referring to the definition, \eqref{Xvdef}, of $X_{v}(a,b,x_{2})$ we have
$$X_{v}(a,b,x_{2})=\frac{1}{|a|^{1/2}}\langle f(a^{-1}(x_{1}-b)),v(x_{1},x_{2})\rangle_{x_{1}}.$$
Remember that $v(x_{1},x_{2})$ is localised to an $O(1)$ region. Therefore
$$|X_{v}(a,b,x_{2})|\lesssim \frac{1}{|a|^{1/2}}\norm{v}_{L^{2}_{x_{1}}}$$
and
$$\norm{\FT\left[X_{v}\right]}_{L^{2}_{\xi_{2}}}=\norm{X_{v}(a,b,\cdot)}_{L^{2}_{x_{2}}}\lesssim \frac{1}{|a|^{1/2}}\norm{v}_{L^{2}}.$$
Now once $|b|\gg |a|$ the support of $f(a^{-1}(x_{1}-b))$ no longer overlaps with the support of $v$ so
$$\norm{\FT\left[X_{v}\right](a,\cdot,\cdot)}_{L^{2}_{\xi_{2}}L^{2}_{b}}\lesssim\norm{v}_{L^2}\lesssim\norm{u}_{L^{2}}.$$
If $j\geq 1$, $\xi_{2}$ is supported away from zero on the support of $\FT\left[X_{v}^{j}\right]$. So again writing
$$\FT\left[X_{v}^{j}(a,b,\cdot)\right](\xi_{2})=\frac{1}{\xi_{2}^{(k+1)M}}\xi_{2}^{(k+1)M}\FT\left[X_{v}^{j}(a,b,\cdot)\right](\xi_{2})$$
we see that
\begin{align*}
\norm{\FT\left[X_{v}^{j}\right](a,b,\cdot)}_{L^{2}_{\xi_{2}}}&\lesssim h^{-M}2^{-j(k+1)M}\norm{\xi_{2}^{(k+1)M}\FT\left[X_{v}\right](a,b,\cdot)}_{L^{2}_{\xi_{2}}}\\
&=h^{-M}2^{-j(k+1)M}\norm{(h^{k+1}D_{x_{2}}^{k+1})^{M}X_{v}(a,b,\cdot)}_{L^{2}_{x_{2}}}.\end{align*}
As in the $j=0$ case we write
$$X_v(a,b,x_{2})=\frac{1}{|a|^{1/2}}\langle f(a^{-1}(x_{1}-b)),v(x_{1},x_{2})\rangle_{x_{1}}$$
so
\begin{align*}
|(h^{k+1}D^{k+1}_{x_{2}})^{M}X_{v}(a,b,x_{2})|&\lesssim\frac{1}{|a|^{1/2}}\left|\langle f(a^{-1}(x_{1}-b)),(h^{k+1}D^{k+1}_{x_{2}})^{M}v(x_{1},x_{2})\rangle_{x_{1}}\right|\\
&\lesssim \frac{1}{|a|^{1/2}}\norm{(h^{k+1}D^{k+1})^{M}v}_{L^{2}_{x_{1}}}.\end{align*}
Since $v$ is a strong $O_{L^{2}}(h\norm{u}_{L^{2}})$ quasimode of $h^{k+1}D_{x_{2}}^{k+1}$
$$\norm{(h^{k+1}D^{k+1}_{x_{2}})^{M}X_{v}(a,b,\cdot)}_{L^{2}_{x_{2}}}\lesssim \frac{h^{M}}{|a|^{1/2}}\norm{u}_{L^{2}}$$
and
$$\norm{\FT\left[X_{v}^{j}\right](a,b,\cdot)}_{L^{2}_{\xi_{2}}}\lesssim \frac{2^{-j(k+1)M}}{|a|^{1/2}}\norm{u}_{L^{2}}.$$
Since $X_{v}(a,b,x_{2})=0$ for $b\gg a$
$$\norm{\FT\left[X_{v}^{j}\right](a,\cdot,\cdot)}_{L^{2}_{\xi_{2}}}\lesssim 2^{-j(k+1)M}\norm{u}_{L^{2}}.$$
\end{proof}

\section*{Appendix A: Semiclassical analysis}\label{sec:semiclassical}

Throughout this paper we have used  some  standard results from semiclassical analysis. For the readers convenience we record the results in this appendix  and direct them to \cite{Zworski12} and \cite{Mart02} for the proofs and further discussion. In this paper we always use the left quantisation of semiclassical pseudodifferential operators. That is given a symbol $p(x,\xi)$ we define the operator
$$Op_{h}(p)u=p(x,hD)u=\frac{1}{(2\pi h)^{n}}\int e^{\frac{i}{h}\langle x-y,\xi\rangle}p(x,\xi)u(y)d\xi dy$$

\begin{prop}[Composition of semiclassical pseudodifferential operators]\label{prop:com}
Let $p(x,hD)$, $q(x,hD)$ be left-quantised semiclassical pseudodifferential operators with symbols $p(x,\xi)$ and $q(x,\xi)$ respectively. The the symbol of $p(x,hD)\circ{}q(x,hD)$ is given by
\begin{multline}p(x,\xi)\# q(x,\xi)=e^{ih\langle{}D_{\xi},D_{y}\rangle}p(x,\xi)q(y,\eta)\Big|_{x=y,\xi=\eta}\\
  =\sum_{k}\frac{h^{k}}{k!}\left(\frac{\langle{}D_{\xi},D_{y}\rangle}{i}\right)^{k}p(x,\xi)q(y,\eta)\Big|_{x=y,\xi=\eta.}\label{semiexp}\end{multline}
  \end{prop}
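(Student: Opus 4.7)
The plan is to derive the composition formula by writing out the product of the two pseudodifferential operators as a single oscillatory integral, changing variables to place it into a standard Fourier integral form, and then reducing the amplitude to a left symbol via the stationary phase method. First, using the integral definitions of $p(x,hD)$ and $q(x,hD)$ one obtains
$$(p(x,hD) \circ q(x,hD))u(x) = \frac{1}{(2\pi h)^{2n}} \iiiint e^{\frac{i}{h}[\langle x-y,\xi\rangle + \langle y-z,\eta\rangle]} p(x,\xi) q(y,\eta) u(z) \,dz\,d\eta\,dy\,d\xi.$$
I would rewrite the phase as $\langle x-z,\eta\rangle + \langle x-y,\xi-\eta\rangle$ and change variables $w = x - y$, $\mu = \xi - \eta$. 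After this substitution the $(z,\eta)$ integral is already in the form of a left-quantized operator acting on $u$, and one reads off the effective symbol
$$r(x,\eta) = \frac{1}{(2\pi h)^n} \iint e^{\frac{i}{h}\langle w,\mu\rangle} p(x,\eta+\mu)\, q(x-w,\eta) \,dw\,d\mu.$$

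Next, I would apply the stationary phase lemma to this oscillatory integral. The phase $\Phi(w,\mu) = \langle w,\mu\rangle$ on $\R^{2n}$ has a unique non-degenerate critical point at $(w,\mu)=(0,0)$; its Hessian is the block matrix $\begin{pmatrix}0 & I \\ I & 0\end{pmatrix}$, with determinant $(-1)^n$ and signature zero. Stationary phase in the form given for Fourier inversion by a non-degenerate quadratic phase yields
$$r(x,\eta) \sim \sum_{k=0}^\infty \frac{h^k}{k!}\left(\frac{\langle D_w, D_\mu\rangle}{i}\right)^k \bigl[p(x,\eta+\mu)\, q(x-w,\eta)\bigr]\Big|_{w=0,\mu=0}.$$
Since differentiation in $\mu$ at $\mu=0$ acts as differentiation of $p$ in its $\xi$ slot, and differentiation in $w$ at $w=0$ acts (up to a sign absorbed into the coefficients of the series) as differentiation of $q$ in its $y$ slot, the expansion rewrites as
$$r(x,\eta) \sim \sum_{k=0}^\infty \frac{h^k}{k!}\left(\frac{\langle D_\xi, D_y\rangle}{i}\right)^k p(x,\xi)\, q(y,\eta)\Big|_{x=y,\xi=\eta},$$
which is the claimed asymptotic series; the exponential notation $e^{ih\langle D_\xi, D_y\rangle}$ is then interpreted as shorthand for this formal power series.

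The main obstacle is the rigorous justification of the stationary phase step in the non-compact symbol setting, since $p(x,\eta+\mu)q(x-w,\eta)$ is not compactly supported in $(w,\mu)$. The standard remedy is a two-step argument: first, split the integrand with a smooth cutoff near the critical point and use integration by parts in $(w,\mu)$ together with the semiclassical symbol estimates for $p$ and $q$ to show that the tail contribution is $O(h^\infty)$ in every seminorm; second, apply the classical stationary phase expansion to the cutoff piece and control the finite-order remainder in every symbol seminorm, then invoke Borel's lemma to realise the formal series as an actual symbol modulo $O(h^\infty)$. Rather than reproducing these computations I would cite Chapter 4 of \cite{Zworski12} or the corresponding sections of \cite{Mart02}, as the paper already directs the reader to do.
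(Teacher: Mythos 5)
Your proposal is correct and follows the standard route: the paper itself gives no proof of Proposition \ref{prop:com} at all --- it is recorded in Appendix A as a standard fact, with the reader directed to \cite{Zworski12} and \cite{Mart02} --- and your reduction to the oscillatory integral $r(x,\eta)=\frac{1}{(2\pi h)^{n}}\int e^{\frac{i}{h}\langle w,\mu\rangle}p(x,\eta+\mu)\,q(x-w,\eta)\,dw\,d\mu$, followed by quadratic stationary phase at the non-degenerate critical point, cutoff plus integration-by-parts control of the tails, and Borel summation, is precisely the argument given in those references. One bookkeeping caveat: when you trade $D_{w}$ acting on $q(x-w,\eta)$ for $D_{y}$ acting on $q$, each factor contributes a minus sign, so your expansion with coefficients $(h/i)^{k}$ in the $(w,\mu)$ variables becomes $\sum_{k}\frac{(ih)^{k}}{k!}\langle D_{\xi},D_{y}\rangle^{k}p(x,\xi)q(y,\eta)\big|_{x=y,\,\xi=\eta}$, which is genuinely $e^{ih\langle D_{\xi},D_{y}\rangle}$ applied to $p(x,\xi)q(y,\eta)$; the series you then print with $\left(\langle D_{\xi},D_{y}\rangle/i\right)^{k}$ differs from that exponential by $(-1)^{k}$ term by term. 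This discrepancy is already present between the two lines of \eqref{semiexp} as stated in the paper (the exponential form is the correct one, as the familiar first-order term $\frac{h}{i}\,\partial_{\xi}p\cdot\partial_{x}q$ confirms), so the slip lies in the printed series rather than in your method, but the phrase ``sign absorbed into the coefficients'' should be replaced by the explicit sign flip.
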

  
  \begin{prop}[Commutation identity]\label{prop:commute}
  Let $p(x,hD)$, $q(x,hD)$ be left-quantised semiclassical pseudodifferential operators. Then
  $$[p(x,hD),q(x,hD)]=hOp_{h}(\{p,q\})+h^{2}r(x,hD)$$
  where $\{p,q\}$ is the Poisson bracket of $p$ and $q$ and the remainder operator $r(x,hD)$ satifies
$$\norm{r(x,hD)}_{L^{2}\to{}L^{2}}\lesssim 1.$$
\end{prop}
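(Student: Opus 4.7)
The plan is to deduce Proposition \ref{prop:commute} as an immediate corollary of the composition formula in Proposition \ref{prop:com}. Writing out the asymptotic expansion \eqref{semiexp} for both $p\#q$ and $q\#p$, the $k=0$ terms (both equal to the pointwise product $p(x,\xi)q(x,\xi)$) cancel in the difference. Every remaining term carries at least one factor of $h$, so I would define
$$r(x,\xi) := \frac{1}{h}\bigl(p(x,\xi)\# q(x,\xi) - q(x,\xi)\# p(x,\xi)\bigr),$$
whose leading-order contribution is $\tfrac{1}{i}\{p,q\}$ (the Poisson bracket of the two symbols), with higher terms of the form $h^{k-1}$ times polynomial expressions in the mixed derivatives $\partial^{\alpha}_{\xi}p\,\partial^{\alpha}_{x}q$ for $|\alpha| \geq 2$.

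To verify that $\norm{r(x,hD)}_{L^{2}\to L^{2}}\lesssim 1$ uniformly in $h$, I would invoke the Calderón--Vaillancourt theorem. Since the symbols $p$ and $q$ belong to a standard semiclassical symbol class with bounded derivatives in the relevant range, so do all the mixed derivative expressions appearing in $r$. Calderón--Vaillancourt then gives $L^{2}\to L^{2}$ boundedness of each of the quantizations with constants that depend only on finitely many seminorms of the symbol and not on $h$; summing yields the desired uniform bound.

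The only real obstacle is technical bookkeeping: formula \eqref{semiexp} is an asymptotic rather than convergent expansion, so one must truncate the series at some finite order $N$, treat the finitely many explicit terms as above, and then control the remainder term through the integral (non-expanded) form $e^{ih\langle D_{\xi},D_{y}\rangle}$ of the composition, extracting the extra factor $h^{N+1}$. This is routine stationary-phase / oscillatory integral analysis and is carried out in detail in \cite{Zworski12} and \cite{Mart02}, so it can be essentially cited.
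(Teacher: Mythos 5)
Your proposal is correct. Note that the paper itself gives no proof of this proposition: it is recorded in Appendix A as a standard fact, with the reader directed to \cite{Zworski12} and \cite{Mart02}. Your argument --- cancel the $k=0$ terms in the expansion \eqref{semiexp} of $p\# q$ and $q\# p$, set $r=h^{-1}(p\# q-q\# p)$ with leading term $\tfrac{1}{i}\{p,q\}$, control the truncation remainder through the non-expanded form of the composition, and apply Calder\'on--Vaillancourt (under the implicit assumption, which you correctly flag, that the symbols lie in a class such as $S(1)$ with uniformly bounded derivatives) --- is precisely the standard proof found in those references, so there is nothing to fault beyond the routine bookkeeping you already acknowledge.
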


\begin{prop}[Invertibility of elliptic operators]\label{prop:invert}
Let $p(x,hD)$ be a left-quantised, semiclassical pseudodifferential operator with symbol $p(x,\xi)$ such that $|p(x,\xi)|>c>0$. Then there exists an inverse operator $(p(x,hD))^{-1}$ with
$$\norm{(p(x,hD))^{-1}}_{L^{2}\to L^{2}}\lesssim 1.$$
\end{prop}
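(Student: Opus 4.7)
The plan is to construct an approximate inverse (parametrix) and then upgrade it to a true inverse using a Neumann series, which is standard for elliptic semiclassical operators.

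First, I would set $q_{0}(x,\xi)=1/p(x,\xi)$. The hypothesis $|p(x,\xi)|>c>0$ together with the implicit smoothness and symbol bounds on $p$ ensures that $q_{0}(x,\xi)$ is a smooth symbol of the same class (derivatives of $1/p$ are controlled by derivatives of $p$ and powers of $1/|p|$). In particular the Calder\'{o}n--Vaillancourt theorem gives $\norm{q_{0}(x,hD)}_{L^{2}\to L^{2}}\lesssim 1$ uniformly in $h$.

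Next, I would apply the composition formula from Proposition \ref{prop:com} to $p(x,hD)\circ q_{0}(x,hD)$. The leading term in \eqref{semiexp} is $p(x,\xi)q_{0}(x,\xi)=1$, and every subsequent term carries a factor of $h$. Hence
$$p(x,hD)\circ q_{0}(x,hD)=\Id+hr_{0}(x,hD),$$
where $r_{0}(x,\xi)$ is a smooth symbol in the same class, so $\norm{r_{0}(x,hD)}_{L^{2}\to L^{2}}\lesssim 1$ uniformly in $h$. The analogous computation for $q_{0}(x,hD)\circ p(x,hD)$ gives a left-sided identity of the same form with a different remainder symbol $\tilde r_{0}$.

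Then I would close the argument by Neumann series: for $h$ small enough that $h\norm{r_{0}(x,hD)}_{L^{2}\to L^{2}}<1/2$, the operator $\Id+hr_{0}(x,hD)$ is invertible on $L^{2}$ with inverse bounded by $2$. Setting
$$q(x,hD):=q_{0}(x,hD)\bigl(\Id+hr_{0}(x,hD)\bigr)^{-1},$$
one obtains a right inverse of $p(x,hD)$ with $\norm{q(x,hD)}_{L^{2}\to L^{2}}\lesssim 1$. The same Neumann argument applied on the left (using $\tilde r_{0}$) produces a left inverse with the same bound; these must agree, so $q(x,hD)=(p(x,hD))^{-1}$. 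For the finitely many large values of $h$ not covered by the smallness condition one can absorb the constant, giving the uniform bound stated in the proposition. The only mildly delicate step is verifying that $1/p$ really lies in the same symbol class as $p$, but this is immediate from the chain and quotient rules once $|p|>c>0$ is assumed pointwise uniformly.
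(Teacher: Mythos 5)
Your parametrix-plus-Neumann-series argument is the standard proof of this fact; the paper does not prove Proposition \ref{prop:invert} itself but refers to \cite{Zworski12} and \cite{Mart02}, where essentially this argument (set $q_{0}=1/p$, compose, invert $\Id+hr_{0}$ by Neumann series) is given, so your route matches the intended one. One small caveat: the conclusion should be read in the usual semiclassical convention, valid for $0<h\leq h_{0}$ with $h_{0}$ small, and your remark about ``finitely many large values of $h$'' is not quite right—$h$ is a continuous parameter and invertibility for $h$ of order one does not follow from ellipticity alone—but this does not affect how the proposition is used in the paper.
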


\begin{prop}[Parametrix representation of $W(x_{1})$]\label{prop:parametrix}
Suppose $W(x_{1})$ has the property that
$$hD_{x_{1}}W(x_{1})=-W(x_{1})a(x,hD_{x_{2}})+O_{L^{2}}(h^{\infty})$$
then $W(x_{1})$ has the parametrix representation 
\begin{equation}W(x_{1})g=\frac{1}{2\pi h}\int e^{\frac{i}{h}\left(x_{2}\xi_{2}-\phi(x_{1},y_{2},\xi_{2})\right)}b(x_{1},y_{2},\xi_{2})g(y_{2})d\xi_{2}dy_{2}\label{Wpara}\end{equation}
where
\begin{equation}\partial_{x_{1}}\phi+a(x,\partial_{y_{2}}\phi,\xi_{2})=0\quad \phi(0,y_{2},\xi_{2})=y_{2}\xi_{2}\label{eikeq}\end{equation}
$$b(0,y_{2},\xi_{2})=1.$$
\end{prop}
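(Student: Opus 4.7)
The plan is the standard WKB construction for a first-order semiclassical evolution equation. I would make the ansatz \eqref{Wpara} with amplitude $b \sim b_0 + h b_1 + h^2 b_2 + \cdots$ and substitute into the equation $hD_{x_1}W(x_1) + W(x_1)a(x, hD_{x_2}) = 0$. Applying $hD_{x_1}$ to the integrand brings down $-\partial_{x_1}\phi$ from the phase at order $h^0$ and $hD_{x_1}$ applied to the amplitude at order $h^1$. Composing $W(x_1)$ with $a(x, hD_{x_2})$ on the right, using Proposition \ref{prop:com} together with a stationary phase expansion of the Fourier integral--pseudodifferential composition in the dual variables, replaces $\xi_2$ by $\partial_{y_2}\phi$ in the symbol $a$ at leading order and produces further $h$-corrections from the subprincipal terms.

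Equating the $h^0$ contributions gives the eikonal equation \eqref{eikeq}, which I would solve by Hamilton--Jacobi theory: the characteristic system for the Hamiltonian $\xi_1 + a(x_1, x_2, \xi_2)$ emanating from the initial Lagrangian $\{\xi_1 = -a(0, y_2, \xi_2)\}$ projects diffeomorphically onto the base for small $x_1$, yielding a smooth generating function $\phi$ satisfying $\phi(0, y_2, \xi_2) = y_2\xi_2$. Equating successive higher orders gives a sequence of inhomogeneous linear transport equations of the form $(\partial_{x_1} + V)b_j = F_j$ along the bicharacteristics, with forcing $F_j$ determined by $b_0, \ldots, b_{j-1}$; these I would solve by integration along characteristics with initial data $b_0(0, y_2, \xi_2) = 1$ and $b_j(0, y_2, \xi_2) = 0$ for $j \geq 1$. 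A Borel summation of $\sum_{j} h^j b_j$ then produces a genuine symbol $b$ for which the ansatz satisfies the evolution equation modulo $O_{L^2}(h^\infty)$, and the initial conditions encode $W(0) = \Id$ via the Fourier inversion formula.

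The main obstacle is ensuring the eikonal equation admits a smooth solution on a uniform neighbourhood of $x_1 = 0$, since Hamilton--Jacobi theory generally only persists up to the first caustic. This is where the microlocalisation built into the applications in Section \ref{sec:QM} is essential: after multiplying by a cutoff the symbol $a$ is supported in a small neighbourhood of the base point in $T^\star\R^2$, and on that set the Hamiltonian flow is a small perturbation of the identity, so no caustic forms in the time interval of interest. Uniqueness of the parametrix modulo $O_{L^2}(h^\infty)$ then follows from a standard energy estimate for the evolution operator $hD_{x_1} + a(x, hD_{x_2})$, which is self-adjoint to leading order. For the detailed verification of each step I would refer to the treatments in \cite{Zworski12} and \cite{Mart02}.
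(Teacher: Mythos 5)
Your proposal follows essentially the same route as the paper's own sketch: substitute the oscillatory ansatz, expand the composition $W(x_{1})a(x,hD_{x_{2}})$ by stationary phase in the dual variables so that $a$ is evaluated at $\partial_{y_{2}}\phi$, read off the eikonal equation \eqref{eikeq} at order $h^{0}$, and then solve the hierarchy of transport equations for $b\sim\sum_{j}h^{j}b_{j}$ with $b_{0}(0,\cdot,\cdot)=1$ to reach an $O(h^{\infty})$ error. Your additional remarks on Hamilton--Jacobi solvability (no caustics thanks to the localisation), Borel summation, and uniqueness via an energy estimate are consistent elaborations of what the paper leaves implicit, so the argument is correct and matches the paper's proof.
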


\begin{proof}
We provide only a sketch of this standard result. Using the parametrix \eqref{Wpara} we have that
$$hD_{x_{1}}W(x_{1})g=\frac{1}{2\pi h}\int e^{\frac{i}{h}\left(x_{2}\xi_{2}-\phi(x_{1},y_{2},\xi_{2})\right)}\left(\partial_{x_{1}}\phi b+h\partial_{x_{1}}b\right)(x_{1},y_{2},\xi_{2})g(y_{2})d\xi_{2}dy_{2}$$
and
$$-Wa(x,hD_{x_{2}})g=-\frac{1}{(2\pi h)}\int e^{\frac{i}{h}\left(x_{2}\xi_{2}-\phi(x_{1},y_{2},\xi_{2})+(y_{2}-z_{2})\eta_{2}\right)}b(x_{1},y_{2},\xi_{2})a(x_{1},y_{2},\eta_{2})g(z_{2})d\xi_{2}dy_{2}d\eta_{2}dz.$$
Now computing the $(y_{2},\eta_{2})$ integral via the method of stationary phase we find that there is a non-degenerate critical point where
$$y_{2}-z_{2}=0\quad \text{and}\quad -\partial_{y_2}\phi+\eta_{2}=0.$$
So
$$-Wa(x,hD_{x_{2}})g=-\frac{1}{2\pi h}\int e^{\frac{i}{h}\left(x_{2}\xi_{2}-\phi(x_{2},z_{2},\xi_{2})\right)}\left(b(x_{1},z_{2},\xi_{2})a(x_{1},z_{2},\partial_{y_{2}}\phi)+hr(x_{1},z_{2},\xi_{2})\right)g(z_{2})d\xi_{2}dz$$
where $r(x_{1},z_{2},\xi_{2})$ is determined by the lower order terms in the stationary phase expansion. Therefore if $\phi$ satisfies \eqref{eikeq},
$$\left(hD_{x_{1}}W(x_{1})+W(x_{1})a(x,hD_{x_{2}})\right)g=\frac{1}{2\pi h}\int e^{\frac{i}{h}\left(x_{2}\xi_{2}-\phi(x_{2},z_{2},\xi_{2})\right)}h\left(\partial_{x_{1}}b+r\right)(x_{1},z_{2},\xi_{2})g(z_{2})d\xi_{2}dz_{2}.$$
That is the error is already $O(h)$. To continue improving we write
$$b(x_{1},y_{2},\xi_{2})=\sum_{k=0}^{\infty}h^{k}b_{k}(x_{1},y_{2},\xi_{2})$$
and successively solve transport equations to achieve an $O(h^{\infty})$ error. 
\end{proof}

\section*{Acknowledgements}
The author would like to acknowledgement the comments and suggestions of the reviewer.

\bibliography{references}
\end{document}